\documentclass[11pt]{amsart}


\usepackage{amsthm} 
\usepackage{amssymb}
\usepackage{graphicx}									 
\usepackage{stmaryrd}                  
\usepackage{footmisc}                  
\usepackage{paralist}
\usepackage{wrapfig}
\usepackage{bbm}
\usepackage[T2A,T1]{fontenc}
\usepackage[utf8]{inputenc}
\usepackage[ngerman,russian,british]{babel}
\usepackage[arrow, matrix, curve]{xy}
\usepackage{color}
\usepackage{pdfcomment}
\usepackage{enumitem}





\newtheorem{thm}{Theorem}[section]

\newtheorem{lem}[thm]{Lemma}

\newtheorem{cor}[thm]{Corollary}
\newtheorem*{corB}{Corollary B}
\newtheorem*{corC}{Corollary C}
\newtheorem{prp}[thm]{Proposition}

\newtheorem*{thmA}{Theorem A}

\theoremstyle{definition}
\newtheorem{dfn}[thm]{Definition}

\theoremstyle{remark}

\newtheorem{rem}[thm]{Remark}


\newcommand{\Co}{\mathbb{C}}
\newcommand{\R}{\mathbb{R}}

\newcommand{\Z}{\mathbb{Z}}

\newcommand{\Orb}{\mathcal{O}}

\newcommand{\alt}{\mathfrak{A}}
\newcommand{\sym}{\mathfrak{S}}
\newcommand{\dih}{\mathfrak{D}}
\newcommand{\cyc}{\mathfrak{C}}

\newcommand{\SOr}{\mathrm{SO}}
\newcommand{\SU}{\mathrm{SU}}

\newcommand{\SL}{\mathrm{SL}}

\newcommand{\Or}{\mathrm{O}}

\newcommand{\To}{\rightarrow}

\newcommand{\MTo}{\mapsto}
\newcommand{\LMTo}{\longmapsto}


\newcommand{\codim}{\text{codim }}
\newcommand{\ico}{\mathbf{I}}
\newcommand{\mini}{\mathrm{min}}
\newcommand{\maxi}{\mathrm{max}}
\newcommand{\rota}{\mathrm{rot}}

\newcommand{\Gp}{G^{\scriptscriptstyle+}}

\newcommand{\subgr}{<}

\setlength\textwidth{\textwidth+74pt}
\setlength\oddsidemargin{\oddsidemargin-37pt}
\setlength\evensidemargin{\evensidemargin-37pt}

\title[When is the underlying space of an orbifold a manifold?]{When is the underlying space of an orbifold a manifold?}
\author{Christian Lange}
\thanks{The results of this paper appear in the author's thesis \cite{Lange_thesis}. The author was partially supported by a `Kurzzeitstipendium für Doktoranden' by the German Academic Exchange Service (DAAD) and by the DFG funded project SFB/TRR 191.}
\address{Christian Lange, Mathematisches Institut der Universit\"at zu K\"oln, Weyertal 86-90, 50931 K\"oln, Germany}
\email{clange@math.uni-koeln.de}
\subjclass{57R18, 54B15}

\begin{document}

\begin{abstract} We classify orthogonal actions of finite groups on Euclidean vector spaces for which the corresponding quotient space is a topological, homological or Lipschitz manifold, possibly with boundary. In particular, our results answer the question of when the underlying space of an orbifold is a manifold.
\end{abstract}

\maketitle

%
%
%

\section{Introduction}

The quotient of a finite-dimensional Euclidean vector space by a finite linear group action inherits different structures from the initial space, e.g. a topology, a metric and a piecewise linear structure. The question when such a quotient is a manifold has been asked in several ways \cite{Mikhailova,MR1118824,MR1935486,Petrunin,MR2883685}. Most notably, it was studied by Mikha\^ilova in the $70$s and $80$s. It led her to the investigation of finite groups generated by \emph{rotations}, i.e. by orthogonal transformations whose fixed-point subspace has codimension \emph{two}. We call such a group a \emph{rotation group}. Examples of rotation groups are orientation preserving subgroups of real reflection groups, and complex reflection groups considered as real groups. Large subclasses of rotation groups were classified in a series of papers by Mikha\^ilova \cite{Maerchik,MR608821,Mikhailova2}. In most of these cases she verified in \cite{Mikhailova} that the corresponding quotient is homeomorphic to the initial Euclidean space. In fact, she claimed to have proven an ``if and only if'' statement. However, apart from the fact that some rotation groups are not considered in \cite{Mikhailova} (cf. \cite{Lange1}), there is a counterexample in the topological category. The binary icosahedral group admits a faithful realization $P<\SOr_4$  and the quotient $S^3/P$ is Poincar\'e's homology sphere. We call a subgroup of the orthogonal group $\Or_n$ conjugate to $P<\SOr_4 \subset \Or_n$ a \emph{Poincar\'e group}. It follows from Cannon's double suspension theorem \cite{MR541330} that $\R^4/P\times \R^2$ is homeomorphic to $\R^6$  (see Lemma \ref{lem:poinc_group_double}), although the action of $P$ on $S^3$ is free and thus not generated by rotations. Without being aware of Mikha\^ilova's work, Davis conjectured in \cite{MR2883685} that, apart from the above mentioned examples, a Poincar\'e group is the only exceptional group with a quotient space homeomorphic to a Euclidean space.

In \cite{LaMik} new examples of rotation groups are described and a classification result is proven based on the earlier work of Mikha\^ilova and others. In fact, in \cite{LaMik} finite groups generated by reflections and rotations are classified. Partly using this classification, the author proves in \cite{Lange1} that the quotient of $\R^n$ by a finite subgroup $G$ of $\Or_n$ is a piecewise linear manifold with boundary if and only if $G$ is a \emph{reflection-rotation group}, i.e. it is generated by reflections and rotations. 
In this paper we show that Davis's conjecture holds true if one allows for general rotation groups. Moreover, we generalize the result as to include manifolds with boundary. More precisely, we prove the following statement.

\begin{thmA} Let $G < \Or_n$ be a finite subgroup. The quotient space $\R^n/G$ with the quotient topology is a topological manifold with boundary if and only if $G$ splits as a product
\begin{eqnarray*}
		G = G_{\mathrm{rr}} \times P_1 \times \cdots \times P_k
\end{eqnarray*}
of a reflection-rotation group $G_{\mathrm{rr}}<\Or(V_{\mathrm{rr}})$ and Poincar\'e groups $P_i<\SOr(V_{i})$, $i=1,\ldots,k$, that act in pairwise orthogonal spaces $\R^n=V_{\mathrm{rr}}\oplus V_{1} \oplus \cdots \oplus V_{k}$ with the following additional condition in the case $n\leq 5$: We must have $k=0$ if $n\leq 4$, or if $G_{\mathrm{rr}}$ contains a reflection. In this case $\R^n/G$ is either homeomorphic to $\R_{\geq 0}\times \R^{n-1}$ and $G$ contains a reflection, or $\R^n/G$ is homeomorphic to $\R^n$ and $G$ does not contain reflections.
\end{thmA}

In particular, the underlying space of an orbifold is a topological manifold if and only if all local groups (see Section \ref{sec:Riemannian_orbifolds}) have this special form, answering a question published by Davis \cite[p.~9]{MR2883685}.

The if direction of Theorem A is a corollary of the double suspension theorem (see Theorem \ref{thm:double_suspension_theorem}), the generalized Poincar\'e conjecture and the result of \cite{Lange1} (see Section \ref{sec:if_dir}). This paper is mainly concerned with the proof of the \emph{only if} statement. Roughly speaking, this proof is divided into four steps. The case of manifolds without boundary is treated in the first three steps. The general case is treated in the last step.

In the \emph{first step} we observe that, if $\R^n/G$ is a topological manifold for a finite subgroup $G<\Or_n$, then strata of $\R^n/G$ that are not contained in the closure of any higher dimensional singular stratum either have codimension two or codimension four, and the corresponding local (isotropy) groups are either cyclic groups or Poincar\'e groups (see Section \ref{sec:homology_criterion}). A key ingredient in this step is a theorem by Zassenhaus that characterizes representations of finite groups with certain properties related to the local homology groups of the corresponding quotient space (see Section \ref{sec:singular_poincare} and Proposition \ref{prp:characterization_minimal_subgroup_Top}). In this way we obtain a normal subgroup $N$ of $G$ generated by rotations and Poincar\'e groups that later turns out to be sufficiently large. In the \emph{second step} we use an elementary observation about spherical triangles and the specific geometric structure of the 600-cell, i.e. the orbit of one point under the action of the Poincar\'e group on $S^3$, to show that the rotation group and all Poincar\'e groups that generate $N$ act in pairwise orthogonal spaces (see Section \ref{sub:orthogonal_splitting}). In the \emph{third step} we show by induction on the dimension that $G/N \curvearrowright S^{n-1}/N$ is a free action on a space with the integral homology groups of a sphere. The algebraic information on $G/N$ obtained in this way suffices to identify $G/N$ as a trivial group (see Section \ref{sub:B_and_C_man}).

In the \emph{fourth step} we show that if $\R^n/G$ is a topological manifold with boundary, then $G$ contains a reflection and the double of $\R^n/G$ is homeomorphic to the quotient of $\R^n$ by the orientation preserving subgroup of $G$. This reduces the general case to the case of a manifold without boundary and an analysis of the action of the reflection. In the proof of this reduction step we apply general facts about Riemannian orbifolds and their coverings (see Section \ref{sec:Riemannian_orbifolds}). 

We state the following corollary from the proof of Theorem A since it answers a question by Petrunin \cite{Petrunin} and puts an observation by Dunbar in a broader context, see \cite[Final remarks]{MR1118824}. 

\begin{corB} Let $G < \Or_n$ be a finite subgroup. The quotient space $S^{n-1}/G$ with the quotient topology is homeomorphic to the $(n-1)$-sphere $S^{n-1}$ if any only if $G$ splits as a product
\begin{eqnarray*}
		G = G_{\rota} \times P_1 \times \cdots \times P_k
\end{eqnarray*}
of a rotation group $G_{\rota}<\Or(V_{\rota})$ and Poincar\'e groups $P_i<\SOr(V_{i})$, $i=1,\ldots,k$, that act in pairwise orthogonal spaces $\R^n=V_{\rota}\oplus V_{1} \oplus \cdots \oplus V_{k}$, with $k=0$ if $n\leq 5$.
\end{corB}

The appearance of Poincar\'e groups in Theorem A involves ``wild'' homeomorphisms \cite{MR0494113}. In case of manifolds without boundary it follows from a theorem of Siebenmann and Sullivan that this phenomenon cannot occur in the Lipschitz category (see Section \ref{sec:Lip_sec}). From this statement, the result of \cite{Lange1} and the fourth step in the proof of Theorem A we deduce the following statement.

\begin{corC}\label{thm:theorem_D} Let $G < \Or_n$ be a finite subgroup. The quotient space $\R^n/G$ with the quotient metric is a Lipschitz manifold with boundary if and only if $G$ is a reflection-rotation group. In this case $\R^n/G$ is bi-Lipschiz homeomorphic to $\R_{\geq 0}\times \R^{n-1}$ if $G$ contains a reflection, or $\R^n/G$ is bi-Lipschitz homeomorphic to $\R^n$ if $G$ does not contain reflections.
\end{corC}
In particular, (the underlying metric space of) a Riemannian orbifold is a Lipschitz manifold with boundary if and only if all local groups are reflection-rotation groups.
\newline
\newline
\emph{Acknowledgements.} I sincerely thank my advisor Alexander Lytchak for his encouragement and support. It is also a pleasure to thank Anton Petrunin for revealing the geometric meaning of Lemma \ref{lem:spherical_comparison} and Franz-Peter Heider for discussions and references on the group homology of $\mathrm{SL}_2(p)$. Moreover, I would like to thank the anonymous referees as well as Hansjörg Geiges and Ricardo Mendes for useful comments that helped to improve the exposition.

The author was partially supported by a `Kurzzeitstipendium für Doktoranden' by the German Academic Exchange Service (DAAD) and by the DFG funded project SFB/TRR 191. The support is gratefully acknowledged.

\section{Preliminaries}

\subsection{Riemannian orbifolds}
\label{sec:Riemannian_orbifolds}
For a definition of a smooth orbifold we refer the reader to e.g. \cite{MR1744486} or \cite{MR2883685}. A Riemannian orbifold can be defined as follows \cite{Lange2}.

\begin{dfn}\label{dfn:orbifold} A \emph{Riemannian orbifold} of dimension $n$ is a metric length space $\Orb$ such that for each point $x \in \Orb$ there exists an open neighborhood $U$ of $x$ in $\Orb$ and a Riemannian manifold $M$ of dimension $n$ together with a finite group $G$ acting by isometries on $M$ such that $U$ and $M/G$ are isometric.
\end{dfn}

Recall that a \emph{length space} is a metric space in which the distance between any pair of points can be realized as the infimum of the lengths of all rectifiable curves connecting these points \cite{MR1835418}. Here $M/G$ is endowed with the \emph{quotient metric}, i.e. the distance between two points of $M/G$ is defined as the distance between the respective orbits in $M$. Behind the above definition lies the fact that an isometric action of a finite group on a simply connected Riemannian manifold can be recovered from the corresponding metric quotient \cite{MR1935486,Lange2}. In particular, every Riemannian orbifold admits a canonical smooth orbifold structure and a compatible Riemannian structure that in turn induces the metric structure \cite{Lange2}. Conversely, every paracompact smooth orbifold admits a compatible Riemannian structure that turns it into a Riemannian orbifolds \cite[Ch.~III.1]{MR1744486}.

For a point $x$ on a Riemannian orbifold, the linearized isotropy group of a preimage of $x$ in a Riemannian manifold chart is uniquely determined up to conjugation. Its conjugacy class in $\Or_n$ is denoted as $G_x$ and it is called the \emph{local group} of $\Orb$ at $x$. The local group $G_x$ of a point $x \in \Orb$ determines the topology and geometry of $\Orb$ in a neighborhood of $x$ via the exponential map in the following way (see e.g. \cite[Lem.~100]{Lange_thesis}).

\begin{lem} \label{lem:lipschitz_orbifold}
For an $n$-dimensional Riemannian orbifold $\Orb$ and a point $x \in \Orb$ there exists a neighborhood of $x$ in $\Orb$ that is locally bi-Lipschitz homeomorphic to $\R^n/G_x$.
\end{lem}

Suppose that $\Orb$ is a global quotient of a manifold $M$ by a smooth action of a finite group $G$. Then the images of the sets
\[
		M_{(H)}=\{x \in M | G_x \text{ is conjugate to } H \text{ in }G\} 
\]
under the natural projection $M \To \Orb$, where $H<G$ is a subgroup, define a stratification of $M$ by manifolds \cite[Prop.~1.12]{MR2883685}. The stratum corresponding to the trivial subgroup is called \emph{regular}. All other strata are called \emph{singular}. 

For a general Riemannian orbifold $\Orb$ there exists a rougher stratification by manifolds where the \emph{codimension $i$ stratum} $\Sigma_i=\Sigma_i \Orb$ consists of all points $x \in \Orb$ with $\mathrm{codim} \mathrm{Fix}(G_x)=i$. In accordance with the terminology above, points in $\Sigma_0 \Orb$ are called regular whereas all other points are called singular.

The remaining discussion is only relevant for the proof of Theorem A in case of manifolds with boundary (see Section \ref{sub:boundary_case}). In this case we are concerned with the closure of the codimension $1$ stratum of a Riemannian orbifolds $\Orb$, which can be characterized as follows (see \cite[Lem.~2.5]{Lange2}).

\begin{lem}\label{lem:charact_codim_1_stratum}
A point $x \in \Orb$ belongs to the closure of the codimension $1$ stratum of $\Orb$, if and only if its local group $G_x$ contains a reflection.
\end{lem}
We refer to the closure of $\Sigma_1 \Orb$ as the \emph{boundary} of $\Orb$ and denote it as $\partial \Orb$ (since it coincides with the boundary of $\Orb$ in the sense of Alexandrov geometry by Lemma \ref{lem:charact_codim_1_stratum}, cf. \cite{MR1185284}).

The topological double $2_{\partial \Orb} \Orb$ can we endowed with a natural metric, which can be described as the unique maximal metric that is majorized by the metrics on the two copies of $\Orb$ in $2_{\partial \Orb}  \Orb$ \cite[3.1.24]{MR1835418} (or, alternatively, as a \emph{gluing metric} \cite[3.1.12, 3.1.27]{MR1835418}, cf. \cite{Lange2} and Section \ref{sec:Lip_sec}). With respect to this metric the two copies of $\Orb$ are isometrically embedded in $2_{\partial \Orb} \Orb$ (cf. \cite[Lem.~5.3.]{Lange2}) and $2_{\partial \Orb} \Orb$ is a length space \cite[3.1.24]{MR1835418}. We call $2_{\partial \Orb} \Orb$ the \emph{metric double} of $\Orb$.

A \emph{covering} $p:\Orb' \To \Orb$ of complete Riemannian orbifolds $\Orb'$ and $\Orb$ of the same dimension can be defined as a submetry, i.e. a map that maps the closed ball $B_r(x)$ onto the closed ball $B_r(p(x))$ for any $x \in \Orb'$ and any $r>0$. Locally such a map is of the form $M/H \To M/G$ for a subgroup $H$ of $G$ as in Definition \ref{dfn:orbifold} (see \cite{Lange2}). In fact, Thurston's original definition is based on this property \cite{Thurston}. The following statement is known and proven in \cite{Lange2}. It will be applied in the proof of Theorem A (see Section \ref{sub:boundary_case}).

\begin{prp}\label{prp:orbifold_double} Let $\Orb$ be a Riemannian orbifold with nonempty boundary. Then the metric double of $\Orb$ along its boundary is a Riemannian orbifold with empty boundary and the natural projection to $\Orb$ is a covering of Riemannian orbifolds.
\end{prp}

\subsection{Piecewise linear manifolds and admissible triangulations}
\label{sub:ad_triangulations}

A \emph{piecewise linear triangulation} of a subset $U\subseteq\R^n$ is a simplicial complex $K$ in $U$ whose underlying space is all of $U$. Let $G<\Or_n$ be a finite subgroup. We say that $G$ acts \emph{simplicially} on a piecewise linear triangulation $K$ of $\R^n$ if it maps simplices of $K$ linearly onto simplices of $K$. We call such a triangulation \emph{admissible (for the action of $G$ on $\R^n$)}, if the action of $G$ on $K$ is in addition regular in the sense of \cite[Ch.~III, Def.~1.2]{MR0413144}, and if $K$ contains the origin as a vertex. Regularity ensures that the quotient $|K|/G$ is in a natural way a simplicial complex which we denote by $K/G$ \cite{MR0413144}. Regularity can always be assumed by passing to the second barycentric subdivision. In particular, regularity implies that if some $g\in G$ fixes a point in the interior of a simplex of $K$, then $g$ fixes this simplex pointwise.

Admissible triangulations always exist \cite[Lem.~3.2]{Lange1}. In particular, all quotients $\R^n/G$ and $S^{n-1}/G$, $G<\Or_n$ finite, that we are working with are homeomorphic to locally finite, finite-dimensional simplical complexes. Moreover, the simplicial complex $K/G$ provides the topological quotient $\R^n/G$ with a \emph{quotient piecewise linear structure} \cite{Lange1}. The quotient $\R^n/G$ being piecewise linear homeomorphic to a subset $U\subseteq \R^n$ means that there exists a simplicial subdivision of $K/G$ that is simplicially isomorphic to a piecewise linear triangulation of $U$; cf. \cite{Lange1}. The following result is proven in \cite{Lange1}.

\begin{thm}\label{thm:theorem_Lange} Let $G < \Or_n$ be a finite subgroup. The quotient space $\R^n/G$ with the quotient piecewise linear structure is a piecewise linear manifold if and only if $G$ is a reflection-rotation group. In this case the quotient space $\R^n/G$ is piecewise linear homeomorphic to $\R_{\geq 0}\times \R^{n-1}$ and $G$ contains a reflection, or $\R^n/G$ is piecewise linear homeomorphic to $\R^n$ and $G$ does not contain reflections. In particular, in these cases $S^{n-1}/G$ is homeomorphic to an $(n-1)$-ball or an $(n-1)$-sphere, respectively.
\end{thm}

\subsection{Poincar\'e groups}
\label{subsec:the_binary_icosahedral_group}
In the following we identify $\R^4$ and $\Co^2$ with the algebra of quaternions $\mathbb{H}$ via 
\[
	 (x_1,y_1,x_2,y_2) \mathrel{\widehat{=}} (x_1+iy_1,x_2+i y_2) \mathrel{\widehat{=}} x_1+iy_1+jx_2+k y_2 .
\]
Moreover, we identify the special unitary group $\SU_2$ with the unit quaternions in $\mathbb{H}$ via
\[
	 \left(
		  \begin{array}{cc}
		    z_1 & -\overline{z_2} \\
		    z_2 & \overline{z_1} \\
		  \end{array}
		\right)=\left(
		  \begin{array}{cc}
		    x_1+iy_1 & -x_2+iy_2 \\
		    x_2+iy_2 & x_1-iy_1 \\
		  \end{array}
		\right)
  \mathrel{\widehat{=}} x_1+iy_1+jx_2+k y_2.
\]
This identification defines an isomorphism of Lie groups. In particular, under these identifications the natural action of $\SU_2$ on $\Co^2$ translates to the action of unit quaternions on $\mathbb{H}$ by left multiplication.

The subspace $\mathbb{I}=\R i\oplus\R j \oplus \R k$ of $\mathbb{H}$ of purely imaginary quaternions is invariant under conjugation by $\SU_2$. The map
\[
		 \begin{array}{cccl}
		 \psi : &\SU_2 				& \To  	&	\SOr_3=\SOr(\mathbb{I}) \\
		           		& g 	& \MTo  & \varphi(g): q \MTo gqg^{-1}
		 \end{array}
\]
is a twofold covering map of Lie groups. The preimage of the orientation preserving symmetry group of a centered icosahedron in $\R^3$ under the map $\psi$ is called the \emph{binary icosahedral group} and we denote it by $\ico$. It is isomorphic to $\SL_2(5)$, the special linear group of a $2$-dimensional vector space over the finite field of order five, and a \emph{perfect} group \cite[p.~196]{MR928600}, i.e. it coincides with its commutator subgroup. With a specific choice of coordinates the binary icosahedral group is given by the union of the $24$ Hurwitz units
\[
		\{\pm 1,\pm i,\pm j,\pm k, \frac{1}{2}(\pm 1\pm i\pm j\pm k)\}
\]
together with all $96$ unit quaternions obtained from $\frac{1}{2}(\pm i\pm\tau^{-1} j\pm \tau k)$ by an even permutation of coordinates, where $\tau:=(1+\sqrt{5})/2$ is the golden ratio \cite{MR0169108}.

The natural action of $\SU_2$ on $\Co^2\mathrel{\widehat{=}}\R^4$ preserves the standard inner product of $\R^4$, and so $\SU_2$ embeds into $\SOr_4$. We refer to the image of the binary icosahedral group in $\SOr_4$ as a Poincar\'e group. In fact, we define a \emph{Poincar\'e group} as a subgroup of $P < \Or_n$ that is conjugate in $\Or_n$ to the image of the binary icosahedral group in $\SOr_4\subset \Or_n$. 

Since $\ico<\SU_2$ acts by left multiplications on $\SU_2$ the action of a Poincar\'e group $P<\SOr_4$ on the unit sphere $S^3 \subset \R^4$ is free. Hence, the quotient $S^3/P$ is a manifold. Moreover, since $P$ is a perfect group, we have $H_1(S^3/P;\Z)=0$ (cf. Section \ref{sec:algebraic_singular_poincare}). Thus, Poincar\'e duality and the universal coefficient theorem imply that $S^3/P$ is an (integral) \emph{homology sphere}, i.e. a topological manifold with the integral homology groups of a sphere. In fact, it is \emph{Poincar\'e's homology sphere} \cite{MR0537730}.

\subsection{Homology manifolds}\label{sub:homology_manifold}
The difficulty in proving the if direction of Theorem A compared to the if direction of Theorem \ref{thm:theorem_Lange} and Corollary C (see Section \ref{sec:if_cor_c}) is that the quotient $S^{n-1}/G$, $G< \Or_n$, does not need to be a topological manifold if $\R^n/G$ is so. Now we explain this in more detail. In the following the symbol ``$\cong$'' between two topological spaces indicates that the spaces are homeomorphic.

The \emph{join $X*Y$} of two topological spaces $X$ and $Y$ is defined as the quotient of $X \times Y \times [0,1]$ obtained by collapsing $\{x\}\times Y \times \{0\}$ to a point for any $x \in X$, and collapsing $X\times \{y\} \times \{1\}$ to a point for any $y \in Y$ (always with respect to the quotient topology). The \emph{suspension} $\Sigma X$ of $X$ can be defined as the join $\{0,1\} * X$. The \emph{double suspension} $\Sigma^2 X:=\Sigma(\Sigma(X))$ is homeomorphic to the join $S^1 * X$.

The following result bases on work by Edwards and Cannon and was first obtained in full generality by Cannon \cite{MR541330}.
\begin{thm}[Cannon, Edwards]
The double suspension $\Sigma^2 X$ of any homology $n$-sphere $X$ is a topological $(n+2)$-sphere.
\label{thm:double_suspension_theorem}
\end{thm}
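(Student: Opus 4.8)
The first thing to observe is that the naive approach fails. Since $\Sigma^2 X = \Sigma(\Sigma X)$ and $\Sigma X$ is connected, van Kampen gives $\pi_1(\Sigma^2 X) = 1$, and a Mayer--Vietoris computation (two contractible cones meeting in $\Sigma X$, together with $\widetilde H_\ast(\Sigma X) \cong \widetilde H_{\ast - 1}(X)$) shows that $\Sigma^2 X$ has the integral homology of $S^{n+2}$. One cannot, however, simply invoke Theorem~\ref{thm:gen_poin_conj}: the \emph{single} suspension $\Sigma X$ has the non-simply-connected closed manifold $X$ as the link of each of its two cone points, so a priori $\Sigma^2 X$ need not be a topological manifold at all. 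The real content of the theorem is precisely that $\Sigma^2 X$ \emph{is} a manifold; the generalized Poincar\'e conjecture then finishes the job. (For $n \le 2$ a homology $n$-sphere is already $S^n$, so the statement has content only for $n+2 \ge 5$.)

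As a preliminary step I would check that $Y := \Sigma^2 X$ is a \emph{generalized $(n+2)$-manifold}. It is a finite-dimensional compact ANR, being an iterated suspension of a closed manifold, so what needs checking is that it has the local homology of $\R^{n+2}$ at every point. Away from its two outer suspension points, $Y$ is $\Sigma X \times \R$, whose non-manifold set is a union of two lines; these close up at the two outer suspension points to form a circle, namely $S^0 \ast S^0 \subset S^0 \ast S^0 \ast X = Y$, which is the singular set of $Y$. Along it $Y$ is locally $cX \times \R$, and the K\"unneth formula for local homology together with $H_\ast(cX, cX \setminus \{v\}) \cong \widetilde H_{\ast - 1}(X)$ yields precisely the local homology of $\R^{n+2}$; at the two outer suspension points the link is $\Sigma X$ (which has the homology of $S^{n+1}$) and the same conclusion holds; elsewhere $Y$ is already a manifold. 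Hence $Y$ is a homology $(n+2)$-manifold.

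The heart of the matter --- and the step I expect to be overwhelmingly the hardest --- is to upgrade ``generalized manifold'' to ``topological manifold''. Here I would appeal to Edwards' manifold recognition theorem: a resolvable generalized $m$-manifold with $m \ge 5$ is a topological manifold provided it has the \emph{disjoint disks property}, i.e.\ any two maps $D^2 \to Y$ can be approximated arbitrarily well by maps with disjoint images. Resolvability is not an issue here (for instance, since $X$ bounds a contractible manifold, $Y$ is a cell-like image of $S^{n+2}$). The singular set of $Y$ is only $1$-dimensional, so the relevant general-position inequalities $2+1 < m$ and $2+2 < m$ --- both amounting to $m = n+2 \ge 5$, which is exactly the hypothesis --- make it believable that $2$-disks can be pushed off the singular circle and off one another; carrying this out \emph{near} the circle, where $Y$ is not locally Euclidean, is the genuinely deep point, equivalent to the classical statement that $cX \times \R \cong \R^{n+2}$. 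This is precisely the substance of the shrinking and engulfing arguments of Edwards and Cannon, and in the present paper it is (appropriately) used as a black box. Granting it, $Y$ is a topological $(n+2)$-manifold, hence a simply connected homology $(n+2)$-sphere, and Theorem~\ref{thm:gen_poin_conj} identifies it with $S^{n+2}$, completing the proof.
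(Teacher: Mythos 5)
The paper does not prove this statement at all: it is Cannon's theorem, imported with a citation to \cite{MR541330} and used as a black box, so there is no internal proof to compare yours with. Your write-up is a correct account of the architecture of the proof in the literature --- the van Kampen and Mayer--Vietoris computations, the identification of the singular set of $\Sigma^2 X = S^1 * X$ with the suspension circle, the K\"unneth check that $Y$ is a homology $(n+2)$-manifold, and the reduction to Edwards' recognition theorem plus Theorem \ref{thm:gen_poin_conj} are all right. But be clear that, as a proof, it is circular at the decisive point: as you yourself observe, verifying the disjoint disks property near the suspension circle is equivalent to the statement that $cX \times \R$ is a manifold, which is the (open) double suspension theorem. ``Granting it'' therefore grants the theorem. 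That is perfectly appropriate in context --- the paper also takes Cannon's shrinking argument on faith --- but what you have produced is an accurate road map locating the difficulty, not an independent argument that discharges it.

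One concrete caveat on the step you did assert: resolvability via ``$X$ bounds a contractible manifold'' is fine for $n \ge 4$ (Kervaire), but for $n = 3$ it fails in the smooth and PL categories --- the Poincar\'e sphere has nonzero Rokhlin invariant, hence bounds no smooth contractible $4$-manifold --- and the topological version is Freedman's theorem, which postdates Cannon. So for homology $3$-spheres the cell-like resolution of $\Sigma^2 X$ has to be obtained differently, which is one reason Cannon's general argument is organized the way it is.
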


In particular, the double suspension $\Sigma^2(S^3/P)$ of Poincar\'e's homology sphere is a topological $5$-sphere. However, $\Sigma(S^3/P)\cong S^4/P$, $P<\SOr_4<\SOr_5$, is not a topological manifold since complements of the suspension points in small neighborhoods are not simply connected. The \emph{open cone} of a topological space $X$ is defined as the quotient $CX=(X \times [0,\infty)) / (X \times \{0\})$.
The following lemma gives an example of the phenomenon mentioned at the beginning of this section.
\begin{lem}
Let $P < \SOr_4$ be a Poincar\'e group. Then there are homeomorphisms $\R \times \R^4/P \cong C(S^4/P) \cong \R^5$.
\label{lem:poinc_group_double}
\end{lem}
\begin{proof}
Recall that the suspension $\Sigma X$ of a topological space $X$ is homeomorphic to the join $X * S^0$. Moreover, the open cone $CX$ is homeomorphic to the suspension $\Sigma X$ with one suspension point removed. The lemma is a consequence of the following chain of homeomorphisms
\begin{alignat*}{1}
		 \R \times \R^4/P &\cong \R \times (CS^3)/P \cong  CS^0 \times C(S^3/P) \cong C(S^0 * S^3/P )\\
								&  \cong C\Sigma(S^3/P) \cong \Sigma^2(S^3/P)- \{*\} \cong S^5 -\{*\} \cong \R^5.
\end{alignat*}
Here we have used that $(CS^3)/P \cong  C(S^3/P)$, the property $C(X_1 * X_2)\cong CX_1 \times CX_2$ of the join \cite[Prop.~I.5.15]{MR1744486} and the double suspension theorem.
\end{proof}

In order to avoid this problem, we work with \emph{homology manifolds}, instead of topological manifolds. Homology manifolds are generalizations of topological manifolds and, for instance, arise in the problem of recognizing the latter \cite{MR0494113}. More precisely, we work with the following definition like in \cite{MR1402473} where integer coefficients are understood.

\begin{dfn}\label{dfn:hom_mfd} Let $X$ be a connected topological space homeomorphic to a locally finite, finite-dimensional simplicial complex. Suppose that for all $x\in X$ we have $H_i(X, X-\{x\})\cong H_i(B^n, B^n-\{p\})$ for some point $p\in B^n$ in the unit-ball in $\R^n$, i.e.
\[
	H_i(X, X-\{x\})=\begin{cases}
  0  &\text{for }i\neq n\\
  0 \text{ or } \Z &\text{for }i= n.\\
\end{cases} 
\]
Then we say that $X$ is a \emph{homology $n$-manifold with boundary}. The \emph{boundary $\partial X$} of $X$ is defined to be
\[
		\partial X := \{x\in X \mid H_n(X, X-\{x\})=0\}.
\]
If the boundary of $X$ is empty, then we call $X$ a \emph{homology $n$-manifold}.
\end{dfn}

\begin{rem} There are definitions of homology manifolds with weaker assumptions on $X$ \cite{MR1886687,MR1019276}. For instance, in \cite{MR1019276} the space $X$ is only demanded to be locally compact and of finite cohomological dimension. Since all spaces we are dealing with are homeomorphic to finite-dimensional locally-compact simplicial complexes by Section \ref{sub:ad_triangulations}, these variations do not make a difference for us.
\end{rem}

If the space $X$ in Definition \ref{dfn:hom_mfd} is a topological manifold with boundary, then it is also a homological manifold with boundary. Moreover, its boundary as a topological manifold coincides with its boundary as a homological manifold.

For a topological space $X$ and a subspace $Y \subset X$ we define the \emph{double of $X$ along $Y$} to be
\[
	 2_Y X = X \times \{0,1\} / \sim \text{ where } (y,0)\sim (y,1) \text{ for all }y\in Y.
\]
We simply denote it by $2X$ if the meaning of the subspace is clear.

Each connected component of the boundary of a homology $(n+1)$-manifold with boundary $X$ is a homology $n$-manifold and closed in $X$, and the double $2X:=2_{\partial X} X$ is a homology $(n+1)$-manifold \cite[Ch.~5]{MR1402473}.

An example of a homology manifold which is not a topological manifold is given by the quotient space $\R^4/P\cong C (S^3/P)$, $P< \SOr_4$ being a Poincar\'e group. Indeed, it is not a topological manifold since complements of the coset of the origin in small neighborhoods are not simply connected, and it is a homology manifold by the following lemma. Its proof consists of standard computations in algebraic topology using the K\"{u}nneth formula and long exact sequences for pairs (see e.g. \cite[Lem.~101]{Lange_thesis}). Moreover, the lemma shows that the initially mentioned problem does not occur for homology manifolds.
\begin{lem} \label{lem:characterization_of_homology_manifolds}
Let $X$ and $Y$ be topological spaces homeomorphic to locally finite, finite-dimensional simplicial complexes. Then the following statements hold for integers $n\geq0$.
\begin{compactenum}
	\item $X$ and $Y$ are homology manifolds if and only if $X\times Y$ is a homology manifold.
 	\item $CX$ is a homology $(n+1)$-manifold if and only if $X$ is a compact homology $n$-manifold and $H_*(X)=H_*(S^n)$.
	\item If $X$ is a homology manifold, then $X\times Y$ is a homology manifold with boundary, if and only if $Y$ is a homology manifold with boundary. In this case we have $\partial (X\times Y) = X \times \partial Y$.
	\item $CX$ is a homology $(n+2)$-manifold with nonempty boundary, if and only if $X$ is a compact homology $(n+1)$-manifold with nonempty boundary and $H_*(X)=H_*(\{*\})$, $H_*(\partial X)=H_*(S^n)$. In this case we have $\partial (CX)=C(\partial X)$.
\end{compactenum}
\end{lem}

\section{The singular role of the Poincar\'e groups}
\label{sec:singular_poincare}

In Section \ref{sub:homology_manifold} we have seen that Poincar\'e groups occur as examples in our Theorem A. In this section we explain the algebraic origin of their singular role. Recall from Section \ref{subsec:the_binary_icosahedral_group} that a Poincar\'e group is abstractly isomorphic to the special linear group $\SL_2(5)$. The following statement is proven in \cite{MR0310895}.

\begin{thm}[Sjerve] If $M^n$ is a homology sphere which is covered by $S^n$ and if $n\geq 3$, then either $\pi_1(M)\cong \{1\}$, or $n=3$ and $\pi_1(M)\cong \SL_2(5)$.
\label{thm:Sjerve}
\end{thm}

This theorem generalizes a result by Zassenhaus. Namely, in \cite{MR3069653,MR3069657} (cf. \cite{MR928600}) he proves that the only non-trivial finite perfect group admitting an irreducible complex fixed-point free representation $\rho$ is $\SL_2(5)$ and that the degree of $\rho$ is two. A complete proof of this statement can be found in Wolf's book \cite[Thm.~6.2.1]{MR928600}. These results are pivotal for our Theorem A. We will also need the following statement which is very much related to Theorem \ref{thm:Sjerve}.

\begin{prp}\label{prp:char_sl_homology} Let $G<\Or_{n+1}$ be a finite subgroup and let $N\triangleleft G$ be a normal subgroup such that both $\R^{n+1}/G$ and $\R^{n+1}/N$ are homology manifolds and such that the action $G/N \curvearrowright S^{n}/N$ is free. Suppose that $n\geq 3$. Then $G/N$ is a perfect group. Moreover, either $G=N$, or $n=3$ and $G/N \cong \SL_2(5)$.
\end{prp}

In the following Section we explain the ingredients of the proofs of Proposition \ref{prp:char_sl_homology} and Theorem \ref{thm:Sjerve}, and reduce the proof of Proposition \ref{prp:char_sl_homology} to the proof of Theorem \ref{thm:Sjerve}.

\subsection{Proof of Proposition \ref{prp:char_sl_homology}}
\label{sec:algebraic_singular_poincare}

As a first step, we need the following lemma. 

\begin{lem}\label{lem:ac_hom_quo} Under the assumptions on $G$ and $N$ in Proposition \ref{prp:char_sl_homology} we have $G < \SOr_{n+1}$ and the induced action of $G/N$ on $H_n(S^n/N)\cong \Z$ is trivial. In particular, the conclusion holds for a finite subgroup $G<\Or_{n+1}$ and trivial $N$ if $G$ acts freely on $S^n$ and $\R^{n+1}/G$ is a homology manifold.
\end{lem}
\begin{proof}
By Lemma \ref{lem:characterization_of_homology_manifolds}, $(ii)$, we have $H_{*}(S^{n}/G)=H_{*}(S^{n}/N)=H_{*}(S^n)$. There are transfer maps $\tau_*:H_{n}(S^{n}/G) \To H_{n}(S^n)$ and $t_*: H_n(S^{n}/N) \To H_{n}(S^{n})$ for $\pi_{*}: H_{n}(S^{n}) \To H_{n}(S^{n}/G)$ and $p_*: H_n(S^{n}) \To H_{n}(S^{n}/N)$ with $\pi_*\circ \tau_* = |G|\cdot \mathrm{id}$ and $p_*\circ t_* = |N|\cdot \mathrm{id}$ \cite[Ch.~3.G]{MR1867354}. In particular, the maps $\pi_{*}$ and $p_*$ are nontrivial. For every $g\in G$ the diagrams 
\[
	\begin{xy}
		\xymatrix
		{
		   H_{n}(S^{n}) \ar[rr]^{g_*=\text{deg}(g)=\text{det}(g)} \ar[dr]_{\pi_*} & & H_{n}(S^{n}) \ar[dl]^{\pi_*} &  H_{n}(S^{n}) \ar[rr]^{g_*} \ar[d]_{p_*} & & H_{n}(S^{n}) \ar[d]_{p_*} \\
		   & H_{n}(S^{n}/G)  &  & H_{n}(S^{n}/N) \ar[rr]^{\overline{g}_*} & & H_{n}(S^{n}/N)  
		}
	\end{xy}
\]
commute. All the spaces in these diagrams are isomorphic to $\Z$, all the maps correspond to multiplication by a nontrivial integer and the maps ${g_*}$ and ${\overline{g}_*}$ are invertible. Therefore ${g_*}$ and ${\overline{g}_*}$ are the identity. It follows that $G < \SOr_{n+1}$ and that $G/N$ acts trivially on $H_{*}(S^{n}/N)$ as claimed.
\end{proof}

For the further argument we first recall some facts about the cohomology of finite groups from \cite{Adem,MR1324339}. Given a finite group $G$ one can construct a contractible CW-complex $EG$ on which $G$ acts freely and whose cells are permuted by $G$. The quotient $BG=EG/G$ is then a CW-complex as well. It is a $K(G,1)$ Eilenberg-MacLane space, meaning that its fundamental group is isomorphic to $G$ and all its higher homotopy groups vanish. The homology and cohomology groups of $G$ with coefficients in a $G$-module $M$ are defined as the homology and cohomology groups of $BG$ with (local) coefficients in $M$, that is
\[
			H_*(G;M):=H_*(BG;M) \text{ and } H^*(G;M):=H^*(BG;M).
\]
Note that the homotopy type of a $K(G,1)$ space, and hence $H_*(G;M)$ and $H^*(G;M)$, is determined by the group $G$ \cite[Thm.~1B.8]{MR1867354}. The (co)homology groups of $G$ can be computed from a projective resolution of $\Z$ over the group ring $\Z G$. Namely, if $F$ is such a resolution and $M$ is some $G$-module, then $H_*(G,M)=H_*(F \otimes_{\Z G} M)$ and $H^*(G,M)=H^*(\mathrm{Hom}_{\Z G}(F,M))$ holds \cite[III.1]{MR1324339}. The well-known identifications $\pi_1(BG)\cong G$ and $H_1(G;\Z)\cong \pi_1(BG)_{\mathrm{ab}}\cong G/[G,G]$ \cite[Thm.~2A.1]{MR1867354} show that  $H_1(G;\Z)$ vanishes if and only if the group $G$ is perfect.

For $d\geq 1$ a finite group $G$ is said to have \emph{$d$-periodic cohomology} if one of the following conditions is satisfied \cite[Thm.~VI.9.1]{MR1324339}, \cite[Prop.~XII 11.1]{MR0077480}.
\begin{thm}[Artin-Tate]\label{thm:Artin-Tate} For $d\geq 1$ the following conditions are equivalent.
\begin{compactenum}
	\item There exists some $n\geq 1$ such that $H^n(G;M)$ and $H^{n+d}(G;M)$ are isomorphic for all $G$-modules $M$.
	\item $H^d(G;\Z) \cong \Z/|G|\Z$.
\end{compactenum}
\label{thm:char_peri_coh}
\end{thm}
If the conditions in the theorem are satisfied, then a generator $u$ of $H^d(G;\Z)$ induces isomorphisms
\[
			u \cup \cdot :H^n(G;M) \To H^{n+d}(G;M)
\]
for all $G$-modules $M$. (In fact, this element $u$ is invertible in the so-called \emph{Tate cohomology ring} of $G$, and the existence of such a $u$ is usually taken as the definition of $d$-periodic cohomology \cite{MR1324339,MR0077480}).

In the following the (trivial) coefficient ring $\Z$ is understood if not specified otherwise. The next proposition can be applied in the situation of Theorem \ref{thm:Sjerve} to the action of the deck transformation group on $S^n$, and in the situation of Proposition \ref{prp:char_sl_homology} to the action of $G/N$ on $S^n/N$ by Lemma \ref{lem:characterization_of_homology_manifolds}, $(ii)$.
\begin{prp} Let $X$ be a compact topological space homeomorphic to an $n$-dimensional simplicial complex, and let $G \curvearrowright X$ be a free action of a finite group $G$. Assume that $H_*(X)=H_*(S^n)$ and that the action of $G$ on $H_n(X)$ is trivial. Then we have
\[
		H^i(G)=H^i(X/G) \text{, } H_i(G)=H_i(X/G), \ \text{for } 0\leq i < n
\]
and $G$ has $(n+1)$-periodic cohomology.
\label{prp:connection-group-topological-homology}
\end{prp}
\begin{proof} We sketch an argument from \cite{MR1324339} which proves the claim in the case in which $X$ is a simplicial complex on which $G$ acts simplicially and regularly (cf. Section \ref{sub:ad_triangulations}). We give a reference for another argument in the general case. Note however that, in the situation in which we apply the present proposition, namely to the action of $G/N$ on $S^n/N$ for some finite subgroups $N\triangleleft G < \Or_{n+1}$, the additional assumptions can be easily arranged (cf. Section \ref{sub:ad_triangulations}).

Let $C_*=C_*(X)$ be the chain complex of the simplicial complex $X$. We have an exact sequence of $G$-modules
\[
		0 \To \Z \To C_n \To \dots \To C_1 \To C_0 \To \Z \xrightarrow[]{\varepsilon} 0,
\]
where each $C_i$ is free, the $G$-action on the two copies of $\Z$ is trivial, the map $\Z \To C_n$ sends $1\in \Z$ to a generator of the cycle subgroup of $C_n$ and $\varepsilon$ is the augmentation map. Splicing together an infinite number of copies of this sequence yields a periodic free resolution of $\Z$ over $\Z G$, see \cite[I.6]{MR1324339}. In particular, $G$ has periodic cohomology by Theorem \ref{thm:Artin-Tate}, $(i)$, and the claim on the (co)homology groups of $X/G$ follows because there are isomorphisms of chain complexes $C_*(X/G)\cong\Z\otimes_{\Z G} C_*(X)$ and $C^*(X/G)\cong\mathrm{Hom}_{\Z G}(C_*(X),\Z))$ \cite[Prop.~II.2.4]{MR1324339}.

In the general case the proposition can be proven as in \cite[Ch.~XVI, \S 9, Appl.~4]{MR0077480} by an application of the Leray-Serre spectral sequence to the fibration $X\To X \times_G EG \To BG$ whose total space is homotopy equivalent to $X/G$, see also \cite[Ch.~9.7]{MR1841974}.
\end{proof}

Examples of finite groups with periodic cohomology are the finite subgroups of $\SU_2$ since they act freely on $S^3$. The binary dihedral groups among them have order $4n$ and can be presented as $\left\langle x,y | x^{2n}=1, y^2=x^n, y^{-1} x y =x^{-1}\right\rangle$. For $n$ being a power of $2$ they are also referred to as \emph{generalized quaternion groups}. The following characterization is proven in \cite[Thm.~XII 11.6]{MR0077480}. The first equivalence is due to Artin and Tate \cite{MR0223335}. The last follows from work of Burnside, Hall, et. al., cf. \cite[Thm.~VI.9.3]{MR1324339}.

\begin{thm}[Artin-Tate, Burnside, Hall, et. al.] Let $G$ be a finite group. The following conditions are equivalent.
\begin{compactenum}
	\item $G$ has periodic cohomology.
	\item Every abelian subgroup of $G$ is cyclic.
	\item The Sylow subgroups of $G$ are either cyclic or generalized quaternion groups.
\end{compactenum}
\label{thm:Brown_characterization_periodic_cohomology}
\end{thm}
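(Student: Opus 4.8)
The plan is to show that all three conditions are each equivalent to the single statement that \emph{every Sylow subgroup of $G$ is cyclic or a generalized quaternion group}; the theorem then follows by transitivity.

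First I would reduce (i) and (ii) to Sylow subgroups. For (ii) this is elementary: a finite abelian group is cyclic precisely when all of its Sylow subgroups are cyclic, and every abelian subgroup $A \leq G$ is the internal direct product of its $p$-primary parts $A_p$, each of which lies in some conjugate of a Sylow subgroup $G_p$; since ``all abelian subgroups cyclic'' is inherited by subgroups and invariant under conjugation, condition (ii) holds for $G$ if and only if it holds for every $G_p$. For (i) I would invoke the standard transfer argument: since $[G:G_p]$ is prime to $p$, the restriction map $\widehat{H}^*(G) \To \widehat{H}^*(G_p)$ is split injective on $p$-primary components, so $\widehat{H}^*(G)$ embeds, compatibly with cup products, into $\prod_p \widehat{H}^*(G_p)$. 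Consequently $\widehat{H}^*(G)$ contains an invertible element of positive degree if and only if each $\widehat{H}^*(G_p)$ does, i.e. $G$ has periodic cohomology if and only if every $G_p$ does. The upshot is that it suffices to prove, for a finite $p$-group $P$, the equivalence of: $P$ has periodic cohomology; every abelian subgroup of $P$ is cyclic; $P$ is cyclic, or $p=2$ and $P$ is generalized quaternion.

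For a $p$-group, ``every abelian subgroup is cyclic'' is equivalent to ``$P$ contains no subgroup isomorphic to $\Z/p \times \Z/p$'', that is, $P$ has $p$-rank one. The classical structure theorem of Burnside then identifies the $p$-groups of $p$-rank one (equivalently, those with a unique subgroup of order $p$) as exactly the cyclic groups together with, for $p=2$, the generalized quaternion groups; this yields the equivalence of (ii) and (iii) for $P$, hence for $G$. It remains to tie in periodic cohomology. If $P$ contains $\Z/p \times \Z/p$ then, by the Sylow reduction applied inside that subgroup, $P$ cannot be periodic, since $\widehat{H}^*(\Z/p \times \Z/p; \F_p)$ has $\F_p$-dimensions growing without bound (its positive part is, up to exterior factors, a polynomial ring on two generators) and therefore admits no periodicity isomorphism. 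Conversely, a cyclic group acts freely on $S^1$ and a generalized quaternion group acts freely on $S^3$ as a subgroup of the unit quaternions, so each admits a periodic free $\Z[P]$-resolution of $\Z$, of period $2$ respectively $4$, and hence has periodic cohomology. Combining, (i) $\Leftrightarrow$ (iii) for $P$, and together with (ii) $\Leftrightarrow$ (iii) this gives all three equivalences for $P$, and therefore for $G$.

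The main obstacle is the classical classification of $p$-groups of $p$-rank one as cyclic or generalized quaternion: this is a genuine, if well known, result of finite group theory rather than a formal manipulation. The second delicate point is the cohomological bookkeeping in the Sylow reduction — one must check that the least common multiple of the Sylow periods is realized by an honest invertible class in $\widehat{H}^*(G)$, which uses the multiplicativity of the stable-element description of $\widehat{H}^*(G)$. The non-periodicity of $\Z/p \times \Z/p$ and the free actions of cyclic and generalized quaternion groups on spheres are routine.
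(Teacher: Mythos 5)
The paper does not prove this theorem itself --- it is quoted verbatim from Brown (\cite{Brown}, Theorem VI.9.5) --- and your outline is exactly the standard argument given there: reduction to Sylow subgroups via primary decomposition of abelian subgroups and the transfer/stable-elements description of $\widehat{H}^*(G)$, the classical classification of $p$-groups with no $\Z/p\times\Z/p$ as cyclic or generalized quaternion, the non-periodicity of $\widehat{H}^*(\Z/p\times\Z/p)$, and the free actions of cyclic and generalized quaternion groups on $S^1$ and $S^3$. The two points you single out as delicate (the rank-one classification and producing an invertible stable class whose degree is the least common multiple of the Sylow periods) are indeed the only substantive steps, and both are correctly identified and handled as in the cited source, so the proposal is sound.
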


If all Sylow subgroups are cyclic then $G$ is solvable by a theorem of Burnside \cite[Ch.~IX.128]{Burnside}. Finite groups with periodic cohomology have been classified by Suzuki and Zassenhaus \cite{Suzuki,MR3069653}. For any prime $p$, the group $\SL_2(p)$ of $2\times 2$ matrices of determinant 1 over the prime field of order $p$ has periodic cohomology \cite[p.~157]{MR1324339}. The following statement is an easy consequence of the classification as observed in \cite[Cor.~2.6]{MR0310895} and \cite[Prop.~105]{Lange_thesis}.

\begin{cor}
Let $G$ be a nontrivial finite perfect group. Then $G$ has periodic cohomology if and only if it is isomorphic to $\SL_2(p)$ for some prime $p>3$.
\label{lem:perfect_periodic_cohomology}
\end{cor}

Let us explain how the proof of Theorem \ref{thm:Sjerve} and Proposition \ref{prp:char_sl_homology} can be completed in the terminology of Proposition \ref{prp:char_sl_homology}.

\begin{proof}[Proof of Proposition \ref{prp:char_sl_homology}]
We can assume that the quotient group $G/N$ is nontrivial. By the Lefschetz fixed point theorem applied to the free action of $G/N$ on $S^n/N$ the dimension $n$ must be odd. By Lemma \ref{lem:characterization_of_homology_manifolds}, $(ii)$, Lemma \ref{lem:ac_hom_quo} and Proposition \ref{prp:connection-group-topological-homology} the group $G/N$ has periodic cohomology and trivial (co)homology in dimension $0< i < n$. In particular, it is perfect and thus isomorphic to $\SL_2(p)$ for some prime $p>3$ by Corollary \ref{lem:perfect_periodic_cohomology}. 

On the other hand, one can either deduce from Schur's work \cite{Schur} that $H_3(\SL_2(p);\Z) \neq 0$ for any prime $p$ as in \cite[Sec.~4.2.3]{Lange_thesis}, or based on \cite{MR0122856} that $H^4(\SL_2(p);\Z) \neq 0$ for any prime $p\geq5$ as in \cite{MR0310895}. Because $n$ is odd and $G/N$ is nontrivial, it follows in both cases that $n=3$. In particular, the acting group has $4$-periodic cohomology.

This is actually all we need (cf. Lemma \ref{lem:normal_subgroup_generated_by_ps_in_dim4}). To conclude that, in the case $G\neq N$, the acting group is isomorphic to $\SL_2(5)$ it is shown in \cite{MR0310895}, based on \cite{MR0122856}, that the minimal cohomological period of $\SL_2(p)$ is $\mathrm{lcm}(4,p-1)$.
\end{proof}

\subsection{Singular role of Poincar\'e groups as linear groups} In the preceding section we gave a criterion to identify Poincar\'e groups as abstract groups. In this section we give criteria to identify them as linear groups. Note that, for a finite subgroup $G<\Or_n$, the subgroup $G_{\rota}$ of $G$ generated by all rotations in $G$ is normal in $G$ since the set of rotations in $G$ is closed under conjugation by $G$. Also note that a Poincar\'e group $P<\SOr_4$ does not contain any rotations since its action on $S^3$ is free (cf. Section \ref{subsec:the_binary_icosahedral_group}). We prove the following two lemmas.
\begin{lem}
Let $G < \SOr_4$ be a finite subgroup and let $G_{\rota} \triangleleft G$ be the normal subgroup generated by the rotations contained in $G$. If $G / G_{\rota}$ is a nontrivial perfect group, then $G$ is a Poincar\'e group $P<\SOr_4$. In particular, the conclusion holds if $G/G_{\rota}$ is isomorphic to $\SL_2(5)$.
\label{lem:normal_subgroup_generated_by_ps_in_dim4}
\end{lem}
\begin{lem}\label{lem:char_p_in_SO(4)}
Any finite subgroup $G < \SOr_4$ isomorphic to $\SL_2(5)$ is a Poincar\'e group.
\end{lem}
The proofs of these lemmas can be deduced from the classification of finite subgroups of $\SOr_4$ \cite{MR0169108}. We first remind the reader of this classification.

Recall from Section \ref{subsec:the_binary_icosahedral_group} how we identify $\R^4$ with $\Co^2$ and with the algebra of quaternions $\mathbb{H}$, and $\SU_2$ with the unit quaternions in $\mathbb{H}$. There are twofold covering maps of Lie groups $\psi:\SU_2 \To \SOr_3$ and $\varphi: \SU_2 \times \SU_2 \To \SOr_4$. The first map has been described in Section \ref{subsec:the_binary_icosahedral_group}. The second map is explicitly given by
\[
		 \begin{array}{cccl}
		 \varphi : &\SU_2 \times \SU_2 				& \To  	&	\SOr_4=\SOr(\mathbb{H}) \\
		           		& (l,r) 	& \MTo  & \varphi((l,r)): q \MTo lqr^{-1}
		 \end{array}
\]
and has kernel $\{\pm(1,1)\}$.

Using these covering maps, the finite subgroups of $\SOr_4$ can be described based on the knowledge of the finite subgroups of $\SOr_3$. These are cyclic groups $\mathfrak{C}_n$ of order $n$, dihedral groups $\mathfrak{D}_n$ of order $2n$ and the symmetry groups of a tetrahedron, an octahedron and an icosahedron. The latter three groups are isomorphic to the alternating group $\alt_4$, the symmetric group $\sym_4$ and the alternating group $\alt_5$, respectively. The finite subgroups of $\SU_2$ are cyclic groups $\mathbf{C}_n$ of order $n$, binary dihedral groups $\mathbf{D}_n$ of order $4n$ and binary tetrahedral, octahedral and icosahedral groups denoted by $\mathbf{T}$, $\mathbf{O}$ and $\mathbf{I}$, respectively. Except for $\mathbf{C}_n$ with odd $n$, these are two-to-one preimages of respective subgroups of $\SOr_3$. In other words, the groups $\mathbf{C}_n$ with odd $n$ are the only subgroups of $\SU_2$ that do not contain the kernel of $\psi$.

The finite subgroups of $\SOr_4$ can be described in terms of the finite subgroups of $\SU_2$ and the covering map $\varphi$ as follows \cite[p.~54]{MR0169108}, \cite[Prop.~14]{LaMik}.

\begin{prp} \label{prp:so_subgroups}
Let $G \subgr  \SOr_4$ be a finite subgroup. Then there exist finite subgroups $\mathbf{L},\mathbf{R} \subgr  \SU_2$ with $-1\in \mathbf{L},\mathbf{R}$ and normal subgroups $\mathbf{L}_K \triangleleft \mathbf{L}$ and $\mathbf{R}_K \triangleleft \mathbf{R}$ such that $\mathbf{L}/\mathbf{L}_K$ and $\mathbf{R}/\mathbf{R}_K$ are isomorphic via an isomorphism $\phi: \mathbf{L}/\mathbf{L}_K \To \mathbf{R}/\mathbf{R}_K$ for which
\[
			G = \varphi(\{(l,r)\in \mathbf{L} \times \mathbf{R} | \phi(\pi_L(l))=\pi_R(r)\})
\]
holds. Here $\pi_L: \mathbf{L} \To \mathbf{L}/\mathbf{L}_K$ and $\pi_R: \mathbf{R} \To \mathbf{R}/\mathbf{R}_K$ are the natural projections. In this case we write $G=(\mathbf{L},\mathbf{L}_K;\mathbf{R},\mathbf{R}_K)_{\phi}$. Conversely, a set of data $(\mathbf{L},\mathbf{L}_K;\mathbf{R},\mathbf{R}_K)_{\phi}$ with the above properties defines a finite subgroup $G$ of $\SOr_4$ by the equation above.
\end{prp}

Given a group $G=(\mathbf{L},\mathbf{L}_K;\mathbf{R},\mathbf{R}_K)_{\phi}$ as in the proposition, the group
\[
			\Gamma:=[\mathbf{L},\mathbf{L}_K;\mathbf{R},\mathbf{R}_K]_{\phi}:=\{(l,r)\in \mathbf{L} \times \mathbf{R} | \phi([l])=[r]\} < \mathbf{L} \times \mathbf{R}
\]
is mapped two-to-one onto $G$ via $\varphi$ and we have
\[
			|G|=\frac{1}{2}|\mathbf{L}||\mathbf{R}_K|=\frac{1}{2}|\mathbf{R}||\mathbf{L}_K|.
\]
For a normal subgroup $\tilde{G}=(\tilde{\mathbf{L}},\tilde{\mathbf{L}}_K;\tilde{\mathbf{R}},\tilde{\mathbf{R}}_K)_{\phi}$ of $G$ we also have
\[
			G/\tilde{G}\cong [\mathbf{L},\mathbf{L}_K;\mathbf{R},\mathbf{R}_K]_{\phi} / [\tilde{\mathbf{L}},\tilde{\mathbf{L}}_K;\tilde{\mathbf{R}},\tilde{\mathbf{R}}_K]_{\phi}.
\]
In particular, we have
\[
			(\mathbf{L},\mathbf{L};\mathbf{R},\mathbf{R}) / (\tilde{\mathbf{L}},\tilde{\mathbf{L}};\tilde{\mathbf{R}},\tilde{\mathbf{R}}) \cong \mathbf{L}/\tilde{\mathbf{L}} \times \mathbf{R}/\tilde{\mathbf{R}}.
\]

Elements of $\SOr_4$ of the form $\varphi(l,1)$ and $\varphi(1,r)$ for $l,r \in \SU_2$ are called \emph{left}- and \emph{rightscrews}, respectively. They commute mutually and act freely on $S^3$. 

\begin{lem}\label{lem:rot_crit} Let $l,r \in \SU_2$. Then $\varphi(l,r)$ is a rotation if and only if $\mathrm{Re}(l)=\mathrm{Re}(r)\notin\{\pm 1\}$ where $r$ and $l$ are considered as unit-quaternions.
\end{lem}
\begin{proof}
For $l,r \in \SU_2$ there exist $a,b \in \SU_2$ and $\alpha,\beta \in \R$ such that $a^{-1}la=\cos(\alpha)+\sin(\alpha)i$ and $brb^{-1}=\cos(\beta)+\sin(\beta)i$. Then, with respect to the basis $\mathbb{B}=\{ab,aib,ajb,akb\}$ of $\R^4$, we have
\[
	\begin{array}{ccl}
		\varphi(l,1)_{\mathbb{B}}  = \left(
		  \begin{array}{cc}
		    R(\alpha) & 0 \\
		    0 & R(\alpha) \\
		  \end{array}
		\right),
  \end{array}
  \begin{array}{ccl}
		\varphi(1,r)_{\mathbb{B}}  = \left(
		  \begin{array}{cc}
		    R(\beta) & 0 \\
		    0 & R(-\beta) \\
		  \end{array}
		\right),
  \end{array}
\]
where $R(\alpha)$ is a rotation by the angle $\alpha$. Hence,
\[
	\begin{array}{ccl}
		\varphi(l,r)_{\mathbb{B}}  = \left(
		  \begin{array}{cc}
		    R(\alpha+\beta) & 0 \\
		    0 & R(\alpha-\beta) \\
		  \end{array}
		\right).
  \end{array}
\]
This shows that $\varphi(l,r)$ is a rotation if and only if precisely one of the angles $\alpha+\beta$ and $\alpha-\beta$ is contained in $2\pi \Z$. This is the case if and only if $\cos(\alpha)=\cos(\beta)\neq 1$. Note that the real part of a quaternion is invariant under conjugation by $\SU_2$. Therefore, we have $\mathrm{Re}(l)=\cos(\alpha)$, $\mathrm{Re}(r)=\cos(\alpha)$ and the claim follows.
\end{proof}

In the notation above a Poincar\'e group is given by $P=(\mathbf{C}_{2},\mathbf{C}_{2};\ico ,\ico)< \SOr_4$. Let us now prove Lemma \ref{lem:normal_subgroup_generated_by_ps_in_dim4} and Lemma \ref{lem:char_p_in_SO(4)}.

\begin{proof}[Proof of Lemma \ref{lem:normal_subgroup_generated_by_ps_in_dim4}]
Assume that $G$ is a finite subgroup of $\SOr_4$ such that the quotient group $G / G_{\rota}$ is nontrivial and perfect. In accordance with Proposition \ref{prp:so_subgroups} we represent this group as $G=(\mathbf{L},\mathbf{L}_K;\mathbf{R},\mathbf{R}_K)_{\phi}$ with $-1\in \mathbf{L},\mathbf{R}$. Likewise, we represent $G_{\rota}$ as $G_{\rota}=(\tilde{\mathbf{L}},\tilde{\mathbf{L}}_K;\tilde{\mathbf{R}},\tilde{\mathbf{R}}_K)_{\tilde{\phi}}$ for normal subgroups $\tilde{\mathbf{L}}\triangleleft\mathbf{L}$ and $\tilde{\mathbf{R}}\triangleleft\mathbf{R}$ with $-1\in \tilde{\mathbf{L}},\tilde{\mathbf{R}}$. Since $G$ is perfect, for all $k>0$ we have 
\[
	G / G_{\rota}\cong \Gamma / \tilde{\Gamma} =D^k(\Gamma / \tilde{\Gamma})\cong D^k(\Gamma) /(D^k(\Gamma)\cap \tilde{\Gamma})
\]
where $\Gamma= [\mathbf{L},\mathbf{L}_K;\mathbf{R},\mathbf{R}_K]_{\phi}$ and  $\tilde{\Gamma} = [\tilde{\mathbf{L}},\tilde{\mathbf{L}}_K;\tilde{\mathbf{R}},\tilde{\mathbf{R}}_K]_{\tilde{\phi}}$, and where $D^k$ denotes the $k$th iterated commutator subgroup operator. Moreover, since $\cyc_n$, $\dih_n$, $\sym_4$, $\alt_4$ and $\mathbf{C}_2$ are solvable, so are $\mathbf{C}_n$, $\mathbf{D}_n$, $\mathbf{T}$ and $\mathbf{O}$. Hence, we have, perhaps after interchanging the factors, $\mathbf{R}=\mathbf{I}$. For, otherwise $D^k(\Gamma)$ would be trivial for sufficiently large $k$.

Since the alternating group $\mathfrak{A}_5$ is simple, the only normal subgroups of $\ico$ are the trivial subgroup $\mathbf{C}_{1}$, $\ico$ itself and its center $\mathbf{C}_{2}$. We claim that $\mathbf{R}_K=\ico$. Otherwise, we would either have $G=(\mathbf{I},\mathbf{C}_2;\mathbf{I},\mathbf{C}_2)$ or $G=(\mathbf{I},\mathbf{C}_1;\mathbf{I},\mathbf{C}_1)$. The group $(\mathbf{I},\mathbf{C}_1;\mathbf{I},\mathbf{C}_1)$ is generated by rotations by Lemma \ref{lem:rot_crit}. Hence, in both cases $G/G_{\rota}$ has order $\leq 2$ and can thus not be nontrivial and perfect. Therefore, we have $\mathbf{R}_K=\ico$ and $G$ is one of the groups $(\mathbf{C}_{2n},\mathbf{C}_{2n};\ico ,\ico)$, $(\mathbf{D}_{n},\mathbf{D}_{n};\ico ,\ico)$, $(\mathbf{T},\mathbf{T};\ico ,\ico)$, $(\mathbf{O},\mathbf{O};\ico ,\ico)$ or $(\ico ,\ico;\ico ,\ico)$.

Let us examine the possibilities for $G_{\rota}$ in these cases. In the case $\tilde{\mathbf{R}}=\tilde{\mathbf{R}}_K=\ico$ we would also have $\mathbf{C}_2<\tilde{\mathbf{L}}=\tilde{\mathbf{L}}_K<\mathbf{L}<\mathbf{I}$ and so the quotient group $G/G_{\rota}\cong \mathbf{L}/\tilde{\mathbf{L}}$ would be solvable. Hence, this case cannot occur. In the cases $\tilde{\mathbf{R}}=\ico$ and $\tilde{\mathbf{R}}_K=\mathbf{C}_{i}$, $i=1,2$, we would have $G_{\rota}=(\ico,\mathbf{C}_{i};\ico,\mathbf{C}_{i})$ and hence $G=(\ico,\ico;\ico,\ico)$. However, this would contradict the fact that $G_{\rota}$ is normal in $G$ since $\ico$ has conjugacy classes (in $\ico$) with more than two elements. Consequently, we must have $\tilde{\mathbf{R}}=\mathbf{C}_{2}$. Since transformations of the form $\varphi((l,\pm1))$ are never rotations by Lemma \ref{lem:rot_crit}, we deduce that $G_{\rota}=(\mathbf{C}_{2},\mathbf{C}_{1};\mathbf{C}_{2},\mathbf{C}_{1})=\{\mathrm{id}\}$ and thus that $G$ is perfect itself. The only nontrivial perfect groups among the remaining possibilities for $G$ are $(\mathbf{C}_{2},\mathbf{C}_{2};\ico ,\ico)$ and $(\ico ,\ico;\ico ,\ico)$. The latter case can be excluded since $(\ico ,\ico;\ico ,\ico)$ contains rotations by Lemma \ref{lem:rot_crit}, but $G_{\rota}=\{\mathrm{id}\}$ as shown above. Hence, we conclude that $G$ is given by $(\mathbf{C}_{2},\mathbf{C}_{2};\ico ,\ico)$, a Poincar\'e group isomorphic to $\SL_2(5)$.
\end{proof}

\begin{proof}[Proof of Lemma \ref{lem:char_p_in_SO(4)}] Similarly as in the preceding proof, the fact that $G\cong \SL_2(5)$ is a prefect group implies that the only possibilities for $G$ are $(\mathbf{C}_{2},\mathbf{C}_{2};\ico ,\ico)$ and $(\ico ,\ico;\ico ,\ico)$. But only $(\mathbf{C}_{2},\mathbf{C}_{2};\ico ,\ico)$ is isomorphic to $\SL_2(5)$ and so the claim follows.
\end{proof}

\section{When is the underlying space of an orbifold a topological manifold?}
\label{sec:if_dir}

According to Lemma \ref{lem:lipschitz_orbifold} the question in this section's title amounts to proving Theorem A. We begin by verifying the if direction of Theorem A and Corollary B. Let $G<\Or_n$ be a product of a reflection-rotation group $G_{\mathrm{rr}}<\Or(V_{\mathrm{rr}})$ and a finite number of Poincar\'e groups $P_i<\SOr(V_i)$, $i=1,\ldots,k$, that act in pairwise orthogonal spaces $V_{\mathrm{rr}}$ and $V_i$, $i=1,\ldots,k$, as in the statement of Theorem A or Corollary B. We can assume that $V_{\mathrm{rot}}=\R^{n-4k}$ and $V_i=\R^4$. Elementary topological arguments show that
\[
\begin{array}{rcl}
  S^{n-1}/G&\cong &S^{n-1-4k}/G_{\mathrm{rr}}* S^3/P_1 * \cdots * S^3/P_k, \\		
		\R^{n}/G&\cong &\R^{n-4k}/G_{\mathrm{rr}}\times \R^4/P_1 \times \cdots \times \R^4/P_k.
\end{array}
\]
By Theorem \ref{thm:theorem_Lange} we have $S^{n-1-4k}/G_{\mathrm{rr}} \cong B^{n-1-4k}$ or $S^{n-1-4k}/G_{\mathrm{rr}} \cong S^{n-1-4k}$ depending on whether $G_{\mathrm{rr}}$ contains a reflection or not. Also note that $S^n*S^m\cong S^{n+m+1}$ and $S^n*B^m\cong B^{n+m+1}$. Moreover, recall from Lemma \ref{lem:poinc_group_double} that, for a Poincar\'e group $P < \SOr_4$, the space $\R \times \R^4/P$ is homeomorphic to $\R^5$, and that the join $S^l * S^3/P$ is homeomorphic to $S^{l+4}$ by the double suspension theorem if $l\geq 1$. Given our additional assumptions on $n$ depending on $k$ in Theorem A and Corollary B, this implies our claims in the case $k\leq 1$. 

For a finite subgroup $G<\Or_n$ we have $C(S^{n-1}/G)\cong \R^n/G$. Moreover, the join operation is associative and satisfies $C(X*Y)\cong CX *CY$ \cite[Prop.~I.5.15]{MR1744486}. Therefore, in the case $k>1$ our claims reduce to an application of the double suspension theorem and the following corollary of it.

\begin{lem}
Let $P_1,P_2 < \SOr_4$ be Poincar\'e groups. Then $S^3/P_1 * S^3/P_2$ is a $7$-sphere.
\label{lem:two_poinc_groups}
\end{lem}
\begin{proof} We can assume that $S^3/P_1$ and $S^3/P_2$ are triangulated by simplicial complexes $K_1$ and $K_2$. Then $K=K_1*K_2$ is naturally a simplicial complex homeomorphic to $X:=S^3/P_1 * S^3/P_2$ (see e.g. \cite{MR0350744}). Since $S^3/P_1$ and $S^3/P_2$ are manifolds, all links of vertices of $K_1$ and $K_2$ are topological $2$-spheres. Hence, for a vertex $x\in K$ we have
\[
		|\mathrm{link}_K(x)|\cong S^2*S^3/P \cong \Sigma^3(S^3/P) \cong S^6
\]
by the double suspension theorem, $P<\SOr_4$ being a Poincar\'e group. Therefore $X$ is a topological manifold. Because of $CX\cong C(S^3/P_1) \times C(S^3/P_2)$ the space $X$ has the integral homology groups of a sphere by Lemma \ref{lem:characterization_of_homology_manifolds}. Moreover, $X$ is simply connected as the join of two path-connected spaces. As a compact, simply connected topological manifold with the integral homology of a sphere the space $X$ has to be a topological $7$-sphere by the generalized Poincar\'e conjecture \cite{Newman}.
\end{proof}

Hence, we have verified the if directions of Theorem A and Corollary B. To prove the only if directions we begin by introducing some organizing concepts.

\subsection{Minimal subgroups} \label{sec:homology_criterion}
Let $G<\Or_n$ be a finite subgroup. For a subgroup $H<G$ and a linear subspace $L\subset \R^n$ we introduce the following notations
\[
\begin{array}{rcl}
  I(L):=G_L   &:=  & \{g\in G\mid\forall x\in L : gx=x\} \\
		L(H):=\text{Fix}(H)   &:=  & \{x\in\R^n\mid g\in G: gx=x\}. \\
\end{array}
\]
Moreover, we write
\[
\begin{array}{rcl}
	  \mathfrak{I}:=\mathfrak{I}(G)   &:=  & \{I(L)\mid\text{ L is a linear subspace of }\R^n\} \\
		\mathfrak{L}:=\mathfrak{L}(G)   &:=  & \{L(H) \mid H \text{ is a subgroup of } G\}. \\
\end{array}
\]
Inclusion induces partial orders on $\mathfrak{L}$ and $\mathfrak{I}$. The group $G$ acts by translation on $\mathfrak{L}$ and by conjugation on $\mathfrak{I}$. The correspondence
\[
\begin{array}{lccc}
  &\mathfrak{I}   & \stackrel{1:1}{\longleftrightarrow}  & \mathfrak{L} \\
   L : 						& H 					& \LMTo  				&  \mathrm{Fix}(H) \\
   I: 						& G_L		& \longmapsfrom &  L,
\end{array}
\]
is order-reversing, one-to-one and $G$-equivariant. Since $G$ is finite, so are $\mathfrak{I}$ and $\mathfrak{L}$. Note that $I(L)$ is by definition the maximal subgroup of $G$ that fixes $L$ pointwise. The equivalence classes under the action of $G$ are in one-to-one correspondence to the strata of $\R^n/G$ and the isotropy group of a point in a stratum is given by the corresponding subgroup in $\mathfrak{I}$. We say that a subgroup $H \in \mathfrak{I}$ is \emph{minimal} if it is a nontrivial minimal subgroup in $\mathfrak{I}$ with respect to inclusion. Corresponding subspaces are called \emph{maximal} subspaces in $\mathfrak{L}$. Conjugacy classes of minimal subgroups and maximal subspaces correspond to strata of $\R^n/G$ that are not contained in the closure of any higher dimensional singular stratum. We write
\[
 \mathfrak{L}_{\maxi}  :=   \{L\in \mathfrak{L} \mid L\text{ maximal}\},\; \mathfrak{I}_{\mini}  :=   \{H\in \mathfrak{I} \mid H\text{ minimal}\}. 
\]
Moreover, we denote the subgroup of $G$ generated by all minimal subgroups in $\mathfrak{I}$ by $G_{\mini}$. We record some properties of minimal subgroups and $G_{\mini}$ in the following lemmas.

\begin{lem}
The group $G_{\mini}$ is normal in $G$. If $G$ is nontrivial, then so is $G_{\mini}$.
\label{lem:min_nonempty}
\end{lem}
\begin{proof}The subgroup $G_{\mini}$ of $G$ is normal in $G$ since $\mathfrak{I}_{\mini}\subset \mathfrak{I}$ is closed under conjugation by $G$. If there are no proper subgroups of $G$ that are minimal in $\mathfrak{I}$, then $G$ itself is minimal. Hence, if $G$ is nontrivial, then so is $G_{\mini}$.
\end{proof}

\begin{lem} For any point $x \in \R^n-\{0\}$ we have $(G_x)_{\mini} \subseteq (G_{\mini})_x$. 
\label{lem:stabilizer_minimizer_inclusion}
\end{lem}
\begin{proof} Every minimal subgroup in $\mathfrak{I}(G_x)$ is also minimal in $\mathfrak{I}(G)$. Hence, $(G_x)_{\mini} \subseteq G_{\mini} \cap G_x = (G_{\mini})_x$.											
\end{proof}

\begin{lem}
Let $H \in \mathfrak{I}$ be nontrivial. Then the action $H \curvearrowright S^{d-1} \subset \R^d = L(H)^{\bot}$ is free if and only if $H$ is minimal.
\label{lem:free_minimal}
\end{lem}
\begin{proof} The action $H \curvearrowright S^{d-1}$ being free means that there is no nontrivial $h\in H$ that fixes a subspace strictly larger than $L(H)$. This is the case if and only if $H$ is minimal.
\end{proof}

Now we deduce necessary conditions on minimal subgroups in $\mathfrak{I}$ in order for $\R^n/G$ to be a homology manifold. 

\begin{lem} Let $G<\Or_n$ be a nontrivial finite subgroup. Assume that $\R^n/G$ is a homology manifold. Then $G < \SOr_n$ and for every minimal subgroup $H \in \mathfrak{I}$ the quotient $S^{d-1}/H$ of the action $H \curvearrowright S^{d-1} \subset \R^d = L(H)^{\bot}$ is a homology sphere.
\label{lem:min_implies_hom_sphere}
\end{lem}
\begin{proof}
By Lemma \ref{lem:characterization_of_homology_manifolds} the quotient $S^{n-1}/G$ has the homology groups of a sphere and is again a homology manifold. The first condition is global and implies $G < \SOr_n$ by Lemma \ref{lem:ac_hom_quo}. The second condition is local and says that each point $p= \pi(x)\in S^{n-1}/G$ has a neighborhood which is a homology manifold. A small neighborhood of $x$ is homeomorphic to $T_x S^{n-1} / G_x \cong \R^{n-1}/G_x$ via the exponential map. Hence, also $T_x S^{n-1} / G_x$ is a homology manifold for every $x \in S^{n-1}$. Proceeding iteratively we find that for each $H\in \mathfrak{I}$ the quotient space of the action $H\curvearrowright \R^d = L(H)^{\bot}$ is a homology manifold. Moreover, if $H$ is a minimal subgroup of $G$, then $S^{d-1}/H$ is also a manifold since, in this case, the action $H \curvearrowright S^{d-1}$ is free by Lemma \ref{lem:free_minimal}. Since $S^{d-1}/H$ has the integral homology groups of a sphere by Lemma \ref{lem:characterization_of_homology_manifolds}, $(ii)$, it is a homology sphere as claimed.
\end{proof}

Now we can show that only very special minimal subgroups occur.

\begin{prp} Let $G<\Or_n$ be a nontrivial finite subgroup. Assume that $\R^n/G$ is a homology manifold. Then, for every $H \in \mathfrak{I}_{\mini}$, denoting $d=\mathrm{codim}L(H)$, either:
\begin{compactenum}
	\item $d=2$ and $H=C_k < \SOr(L(H)^{\bot})$, a cyclic group of order $k$ for some $k\geq 2$.
	\item $d=4$ and $H=P < \SOr(L(H)^{\bot})$, a Poincar\'e group.
\end{compactenum}
\label{prp:characterization_minimal_subgroup_Top}
\end{prp}
\begin{proof}
By Lemma \ref{lem:min_implies_hom_sphere} we have $G< \SOr_n$ and the quotient $S^{d-1}/H$ of the free action $H \curvearrowright S^{d-1} \subset \R^d = L(H)^{\bot}$ is a homology sphere with fundamental group isomorphic to $H$. In the case $d=2$ the group $H$ has to be a cyclic group $C_k < \SOr(L(H)^{\bot})$ of order $k$ for some $k\geq 2$. The case $d=3$ cannot occur since every nontrivial subgroup of $\SOr_3$ has fixed points on $S^2$. In the case $d \geq 4$ it follows either from Zassenhaus's result \cite{MR3069653,MR3069657} (cf. \cite[Thm.~6.2.1]{MR928600}) and the fact that $H_3(\SL_2(p);\Z) \neq 0$ (cf. proof of Proposition \ref{prp:char_sl_homology}) or by Theorem \ref{thm:Sjerve} that $d=4$ and that $H$ is isomorphic to $\SL_2(5)$. Therefore $H$ is a Poincar\'e group by Lemma \ref{lem:char_p_in_SO(4)} and so the proposition is proven.
\end{proof}

Let $G_{\rota}$ be the subgroup of $G$ generated by all rotations contained in $G$. The subgroup $G_{\rota}$ is normal in $G$ since the set of all rotations in $G$ is invariant under conjugation by $G$. Set $\mathfrak{I}_P:=\{H\in \mathfrak{I}_{\mini}| \mathrm{codim}(L(H))=4 \}$. We summarize the results of this section in a proposition.

\begin{prp} Let $G<\Or_n$ be a nontrivial finite subgroup. Assume that $\R^n/G$ is a homology manifold. Then $G<\SOr_n$, and the group $G_{\mini}$ is nontrivial and normal in $G$. Moreover, the set $\mathfrak{I}_P$ is invariant under conjugation by $G$, it consists of a finite number of Poincar\'e groups and we have $G_{\mini}=\left\langle G_{\rota},\mathfrak{I}_P \right\rangle$. In particular, for a rotation group $G$ we have $G=G_{\mini}$.
\label{prp:minsub_generators}
\end{prp}
\begin{proof}The first two claims simply repeat the statements of Lemma \ref{lem:min_nonempty} and Lemma \ref{lem:min_implies_hom_sphere}. By Proposition \ref{prp:characterization_minimal_subgroup_Top} every minimal subgroup $H$ with $\mathrm{codim}L(H)=2$ is contained in $G_{\rota}$. Conversely, because of $G<\SOr_n$ and the fact that any transformation $g\in \Or_n$ with $\mathrm{codim}\text{Fix}(G)=1$ reverses the orientation, every rotation in $G$ belongs to a minimal subgroup $H$ with $\mathrm{codim}L(H)=2$. Hence, $G_{\rota}$ is generated by all minimal subgroups $H$ with $\mathrm{codim}L(H)=2$. Since the set of all minimal subgroups is finite and invariant under conjugation by $G$, so is $\mathfrak{I}_P$. By Proposition \ref{prp:characterization_minimal_subgroup_Top} each $P\in\mathfrak{I}_P$ is a Poincar\'e group. The claim follows since there are no other minimal subgroups by Proposition \ref{prp:characterization_minimal_subgroup_Top}.
\end{proof} 

\subsection{Orthogonal splitting}\label{sub:orthogonal_splitting}
In this section we show that the subgroups $G_{\rota}$ and $P \in \mathfrak{I}_P$ occurring in Proposition \ref{prp:minsub_generators} act in pairwise orthogonal subspaces. More precisely, we show that $\R^n$ splits as an orthogonal sum of subspaces 
\[
	\R^n = V_{\rota} \oplus \bigoplus_{L \in \mathfrak{L}_P} L^{\bot}
\]
where we have set $\mathfrak{L}_P:=\{L\in \mathfrak{L}\mid \mathrm{codim}(L)=4 \}=\{L(P)\mid P\in \mathfrak{I}_P \}$, and $V_{\rota}$ to be the span of $V_0:=\text{Fix}(G_{\mini})$ and the orthogonal complements of all maximal subspaces in $\mathfrak{L}$ of codimension two.
\begin{figure}
	\centering
		\def\svgwidth{0.6\textwidth}
		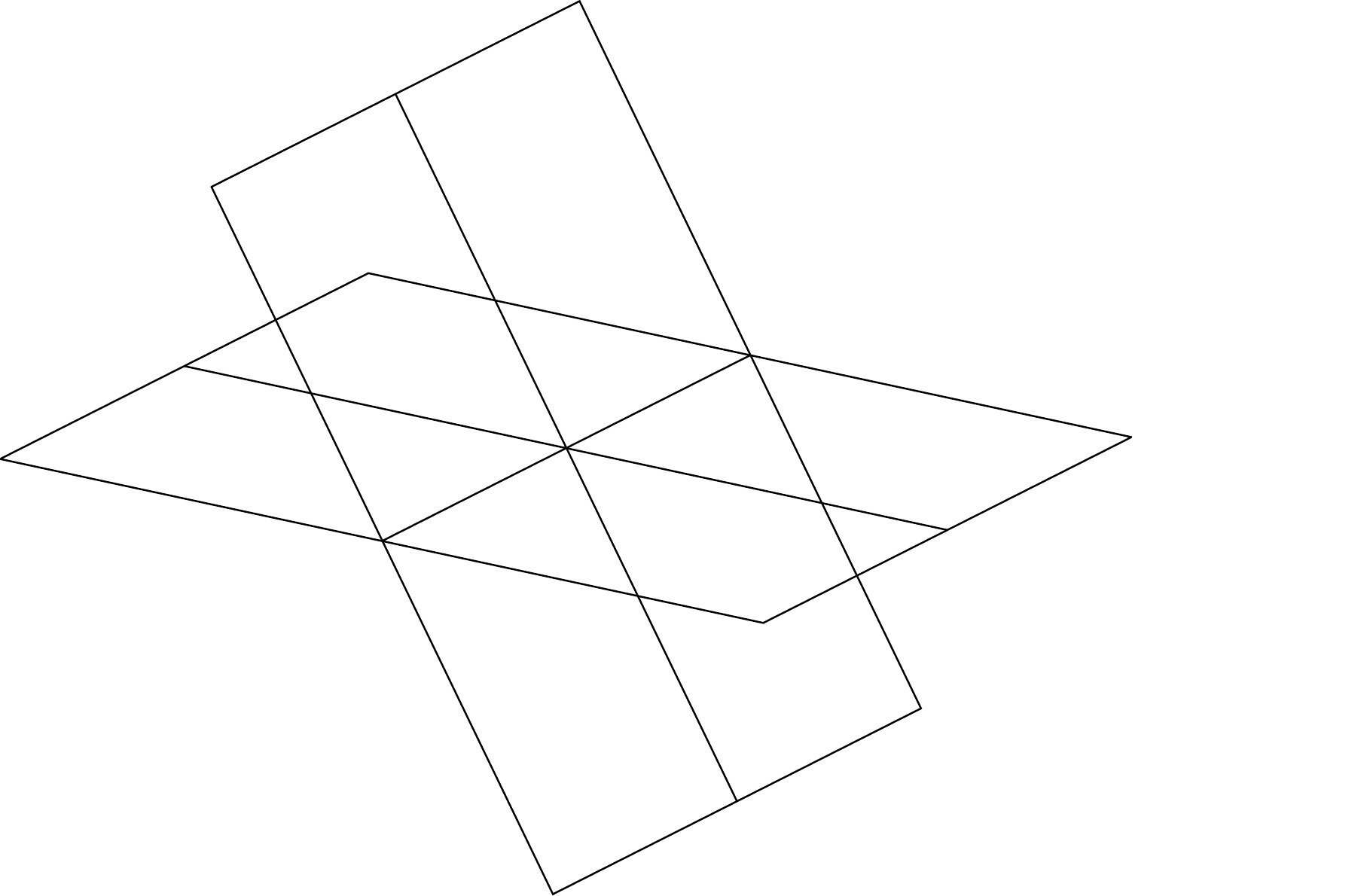
	\caption{Intersection of two fixed-point subspaces.}
	\label{fig:asymptotic-geodesics}
\end{figure}

We begin by showing that the Poincar\'e groups act in pairwise orthogonal spaces. The claim that the rotation group $G_{\rota}$ acts in a space orthogonal to all of them can then be reduced to this case, see Lemma \ref{lem:4_and_2}. The idea of the proof is to reach a contradiction to the finiteness of $\mathfrak{L}_P$ on the assumption that there are two Poincar\'e groups that do not act in orthogonal spaces. We achieve this by defining an auxiliary function $D:\mathfrak{L}_P\times\mathfrak{L}_P \To [0,5]$ and by showing that, if there are distinct subspaces $L_1,L_2 \in \mathfrak{L}_P$ whose orthogonal complements are not orthogonal, then there exists another such pair $L'_1,L'_2 \in \mathfrak{L}_P$ with $D(L'_1,L'_2)<D(L_1,L_2)$ (see Lemma \ref{lem:poincares_are_orthogonal}).

More precisely, we proceed as follows. For $L_1,L_2 \in \mathfrak{L}_P$ we set $K_{i} := (L_1\cap L_2)^{\bot_{L_i}}$, $i=1,2$. Here $\bot_{L_i}$ denotes the orthogonal complement in $L_i$. We can decompose $L_1$ and $L_2$ into orthogonal sums
\[
		L_1 = \left(L_1\cap L_2\right) \oplus K_{1} \text{ and } \ L_2 = \left(L_1\cap L_2\right) \oplus K_{2}.
\]
The function $D$ is defined as
\[
		D(L_1,L_2) := \text{dim}K_{1}+d(K_{1},K_{2})
\]
where
\[
		d(K_1,K_2) := \begin{cases}
  									0   & \text{if dim }K_1 \cdot \text{dim }K_2=0\\
  									\dfrac{2}{\pi} \angle(K_1,K_2) 
  									 & \text{otherwise,}
							\end{cases}
\]
and
\[
		\angle(K_1,K_2)  :=  \mathrm{min} \left\{ \angle(v_1,v_2) \mid 0\neq v_1 \in K_1, 0\neq v_2 \in K_2 \right\}.  				
\]
Here $\angle(v_1,v_2)$ denotes the angle between the vectors $v_1$ and $v_2$ measured in radians. We record some properties in a lemma.
\begin{lem} \label{lem:characterization_of_D}For subspaces $L_1,L_2 \in \mathfrak{L}_P$ the following properties are satisfied.
\begin{compactenum}
\item $K_1\cap K_2=\{0\}$ and $\mathrm{dim}K_{1}=\mathrm{dim}K_{2}\in\{0,1,2,3,4\}$.
\item $d(K_{1},K_{2})\in [0,1]$, and $d(K_{1},K_{2})=1 \Leftrightarrow (K_1 \bot K_2\text{ and } K_i \neq \{0\}, i=1,2)$.
	\item $0\leq D(L_1,L_2)\leq 5$.
	\item $D(L_1,L_2)=0 \Leftrightarrow L_1=L_2\Leftrightarrow d(K_1,K_2)=0$.
	\item $D(L_1,L_2)=5 \Leftrightarrow \left(\mathrm{dim}K_{1}=\mathrm{dim}K_{2}=4 \text{ }\mathrm{and}\text{ } K_{1} \bot K_{2}\right) \Leftrightarrow L_1^{\bot}\bot L_2^{\bot}$.
\end{compactenum}
\end{lem}
\begin{proof}
$(i)$ The inclusions $K_i\subseteq L_i$, $i=1,2$, imply $K_1\cap K_2 \subseteq L_1\cap L_2$. On the other hand, $K_1\cap K_2$ is orthogonal to $L_1\cap L_2$. Hence, the first claim follows. Because of $\mathrm{codim}L_{1}=\mathrm{codim}L_{2}=4$ we have $\mathrm{codim}(L_{1}\cap L_{2})\in \{4,5,6,7,8\}$. Therefore, the second claim follows from 
 $\mathrm{dim}K_{i}=\mathrm{dim}L_{i}-\mathrm{dim}(L_{1}\cap L_{2})=\mathrm{codim}(L_{1}\cap L_{2})-4$, $i=1,2$.

$(ii)$ This is clear from the definition of $d$.

$(iii)$ This follows from $(i)$ and $(ii)$.

$(iv)$ We have $D(L_1,L_2)=0$ if and only if $\text{dim }K_{1}=0$. This is the case if and only if $L_1=L_2$. Because of $K_1\cap K_2=\{0\}$ by $(i)$, the condition $\text{dim }K_{1}=0$ is also equivalent to $ d(K_1,K_2)=0$

$(v)$ We have $D(L_1,L_2)=5$ if and only if $\text{dim }K_{1}=4$ and $d(K_{1},K_{2})=1$. Hence, the first equivalence follows from $(ii)$. In this case we have $K_{1} \bot K_{2}$. This implies $K_{1}=L_2^{\bot}$ and $K_{2}=L_1^{\bot}$ by dimension reasons and thus also $L_1^{\bot}\bot L_2^{\bot}$. On the other hand, $L_1^{\bot}\bot L_2^{\bot}$ implies $K_{1}=L_2^{\bot}$ and $K_{2}=L_1^{\bot}$ and so the converse holds, too.
\end{proof}
To pursue our strategy we need the following elementary geometric lemma.
\begin{lem}\label{lem:spherical_comparison}
Let $V$ be a Euclidean vector space with a proper subspace $W$ and let $U = W^{\bot}$ be the orthogonal complement of $W$ in $V$. Let $v= w+u$ be the orthogonal decomposition with respect to $W$ and $U$ of a unit-vector $v \in V$ with $0 \neq u \in U$. Let $\phi \in \Or(V)$ be such that $\phi_{|W}=\mathrm{id}_W$. If $\alpha:=\angle( u, \phi(u)) \leq \pi/3$, then
$
		\gamma:=\angle(v,\phi(v)) < \angle(v,W)=: \beta.
$
\label{lem:angle_reduction}
\end{lem}
\begin{figure}
	\centering
		\def\svgwidth{0.7\textwidth}
		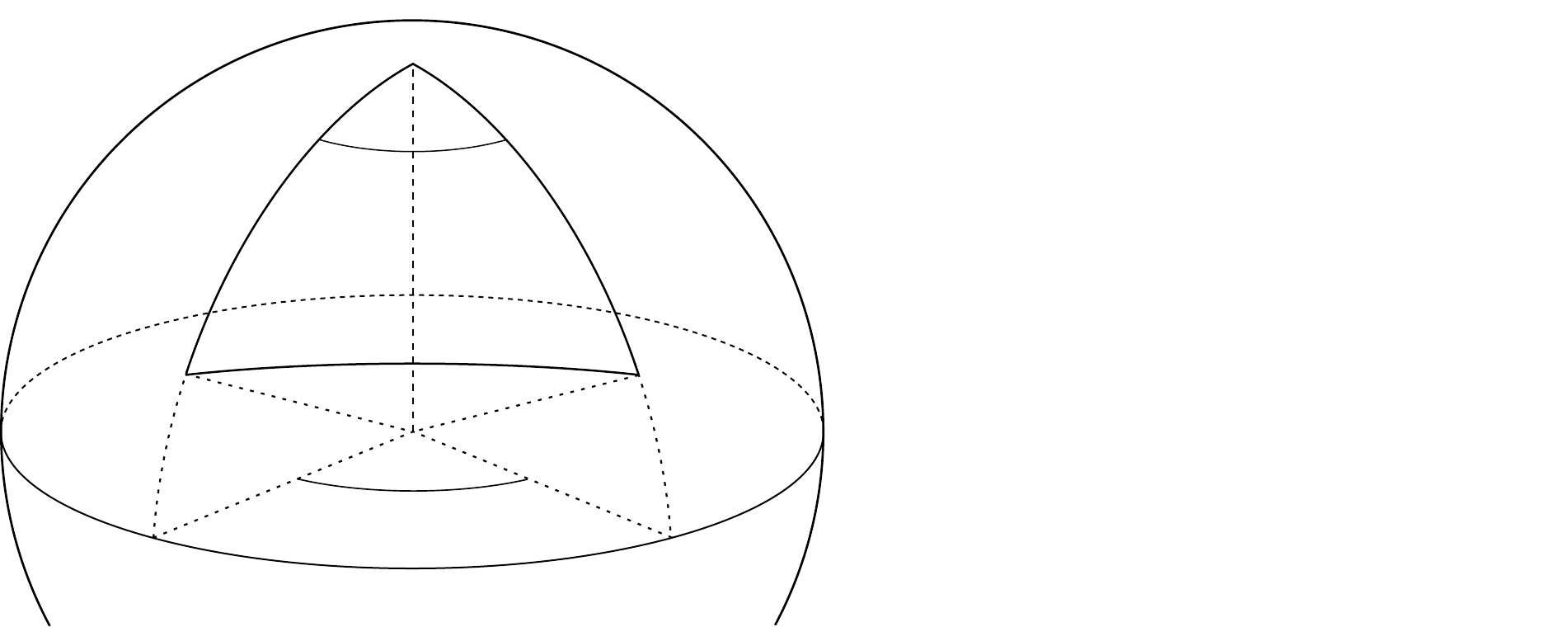
	\caption{Left: An isosceles spherical triangle with opening angle $\alpha\leq \pi/3$, leg length $\beta<\pi/4$ and base length $\gamma$. Right: An isosceles Euclidean comparison triangle with opening angle $\alpha$, leg length $\beta$ and base length $\gamma'$. In the figure we have set $\hat{u}=\frac{u}{\left\|u\right\|}$ and $\hat{w}=\frac{w}{\left\|w\right\|}$.}
	\label{fig:spherical-triangle}
\end{figure}
\begin{rem}\label{rem:strict_ineq} It is crucial for our proof to have the strict inequality $\gamma < \beta$, see proof of Lemma \ref{lem:poincares_are_orthogonal}. In view of Remark \ref{rem:not_in_subspace} also note that the statement of the lemma is wrong for every $\alpha > \pi/3$. Namely, in this case it fails for sufficiently small $\beta$.
\end{rem}
\begin{proof} The lemma can be proven by abstract calculations with inner products. Here, we give a proof based on comparison geometry. It better reflects the geometric meaning of the lemma, which was brought to our attention by Petrunin. Roughly speaking, the reason why the lemma is true is the fact that the unit sphere in $\R^3$ with its induced Riemannian metric has strictly positive curvature.

For $w=0$ or $\phi(v)=v$ the claim is trivially true. Otherwise, the vectors $\hat{w}=\frac{w}{\left\|w\right\|}$, $v$ and $\phi(v)$ span a three-dimensional subspace of $V$ and lie on an open hemisphere of the unit sphere $S^2$ of this subspace as depicted in Figure \ref{fig:spherical-triangle}. In particular, they can be connected by unique minimizing geodesics on $S^2$ and form the vertices of a nondegenerate spherical triangle in $S^2$. The angle $\alpha$ appears as the angle at $\hat w$ of this spherical triangle. The angles $\beta$ and $\gamma$ appear as side lengths $\beta=d_{S^2}(\hat{w},v)=d_{S^2}(\hat{w},\phi(v))$ and $\gamma = d_{S^2}(v,\phi(v))$ of this triangle, cf. Figure \ref{fig:spherical-triangle}, where $d_{S^2}$ denotes the induced length metric on $S^2$. We can draw an isosceles Euclidean comparison triangle in $\R^2$ with opening angle $\alpha$ and leg length $\beta$. Our assumption $\alpha\leq \pi/3$ implies that the base length $\gamma'$ of this Euclidean triangle satisfies $\gamma'\leq \beta$. Hence, in order to prove the lemma it remains to show that $\gamma<\gamma'$. This follows from a strict version of Rauch's comparison theorem and the fact that the curvature of $S^2$ is strictly positive \cite[Ch.~10.2]{MR1138207}. Here, we give a more elementary argument using trigonometry. By the Euclidean and the spherical law of sine \cite{MR1802706}, applied to our triangles, we have
\[
	 \frac{\sin(\alpha)}{\gamma'}=\frac{\sin(\delta')}{\beta} \text{ and } \frac{\sin(\alpha)}{\gamma}=\frac{\sin(\delta)}{\beta},
\]
and hence $\sin(\delta)\gamma = \sin(\delta')\gamma'$. In particular, we have $\gamma<\gamma'$ if and only if $\sin(\delta) > \sin(\delta')$. Now, the surface area of our spherical triangle is $A=\alpha+2\delta-\pi$ \cite{MR1802706}. It is bounded from above by the surface area $B$ of the spherical triangle defined by $\hat w$, $\hat u$ and $\phi(\hat u)$, which is $B=\frac{1}{2}\frac{\alpha}{2\pi} \cdot 4\pi=\alpha$. This implies $\alpha+2\delta -\pi =A\leq B= \alpha$ and thus $\delta\leq \pi/2$. Therefore, the condition $\sin(\delta) > \sin(\delta')$ is equivalent to the condition $\delta > \delta'$ by the strict monotonicity of the sine function on $[0,\pi/2]$. Since the sum of all angles of a nondegenerate spherical triangle is strictly bigger than $\pi$, we have $\alpha+2\delta>\pi=\alpha+2\delta'$. This implies $\delta > \delta'$ and so the claim follows.
\end{proof}
In order to apply this lemma we need to understand the orbit geometry of the action $P \curvearrowright S^3$ of a Poincar\'e group $P<\SOr_4$. First note that all orbits are isometric since the canonical metric on $S^3$ is $S^3$-bi-invariant. So let $x\in S^3$ be an arbitrary point and let $X=Px$ be its orbit under the action of $P$. Since $P$ is finite, the sets $X_{\alpha}=\{y\in X|\angle(x,y)=\alpha \}\subset X$ are empty except for a finite number of values of $\alpha$. The points of $X_{\alpha}$ lie in the intersection of $S^3$ with a hyperplane of $\R^4$. This intersection is a point for $\alpha\in\{0,\pi\}$ and a two-dimensional sphere for $\alpha=(0,\pi)$. From the explicit coordinate representation of $P$ described in Section \ref{subsec:the_binary_icosahedral_group} one can read off the structure of $X=\bigcup_{\alpha\in[0,\pi]}X_{\alpha}$, which is summarized in Table \ref{tbl:structure_600cell} \cite[p.~98]{Buekenhout}. The last column specifies the geometric structure of the points in $X_{\alpha}$. The points in $X_{\pi/5}$ form the vertices of an icosahedron. Therefore, their affine span is three-dimensional. Since every point in $X_{\pi/3}$ is not contained in this affine subspace, the following lemma holds true.
\begin{lem}
For any $\alpha\in [\pi/3,\pi]$ the set $\bigcup_{\beta\in(0,\alpha]}X_{\beta}$ is not contained in an affine three-dimensional subspace of $\R^4$.
\label{lem:not_in_subspace}
\end{lem}
\begin{rem}\label{rem:not_in_subspace} Note that the lemma is wrong for any $\alpha<\pi/3$. For our proof it is crucial that the set of $\alpha$ to which both Lemma \ref{lem:angle_reduction} and Lemma \ref{lem:not_in_subspace} apply is nonempty; see proof of Lemma \ref{lem:poincares_are_orthogonal}. In fact, it consists of the single value $\alpha=\pi/3$.
\end{rem}

\renewcommand{\arraystretch}{1.2}
\begin{table}[t]
\begin{tabular}{ c | c | c }                        
  $\alpha$ & $|X_{\alpha}|$ & $X_{\alpha}$ \\
  \hline 
  $0,\pi$  & $1$  & point \\
  $\pi/5,4\pi/5$ & $12$ & icosahedron \\
  $\pi/3,2\pi/3$ & $20$ & dodecahedron \\
  $2\pi/5,3\pi/5$ & $12$ & icosahedron \\
  $\pi/2$ & $30$ & icosidodecahedron \\
\end{tabular}
\caption{Geometric structure of the $600$-cell.}
\label{tbl:structure_600cell}
\end{table}\renewcommand{\arraystretch}{1}
Now we can prove the first splitting lemma.
\begin{lem}
For distinct $L_1,L_2 \in \mathfrak{L}_P$ we have $D(L_1,L_2)=5$. That is, $L_1^{\bot}\bot L_2^{\bot}$ holds by Lemma \ref{lem:characterization_of_D}. Thus the corresponding Poincar\'e groups $P_1,P_2<G$ act in orthogonal spaces.
\label{lem:poincares_are_orthogonal}
\end{lem}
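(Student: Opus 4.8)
The plan is to argue by contradiction, exploiting the finiteness of $\mathfrak{L}_P$. Suppose some pair of distinct subspaces in $\mathfrak{L}_P$ has distance less than $5$; since $\mathfrak{L}_P$ is finite we may then choose $L_i \neq L_j$ in $\mathfrak{L}_P$ with $D_0 := D(L_i,L_j) < 5$ \emph{minimal} among all such pairs. I will produce an element $\psi$ of the Poincar\'e group $P_i$ fixing $L_i$ pointwise such that $\psi L_j \neq L_j$ and $D(L_j,\psi L_j) < D_0$. Since $\Gamma$ permutes $\mathfrak{L}_P$ and $\psi \in P_i \leq \Gamma$, the subspace $\psi L_j$ again lies in $\mathfrak{L}_P$, so this contradicts the minimality of $D_0$ and forces every distance in $\mathfrak{L}_P$ to be $5$, which by Lemma \ref{lem:characterization_of_D} is the assertion.

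First I would set up the bookkeeping. By Proposition \ref{prp:characterization_minimal_subgroup_Top} the subspaces in $\mathfrak{L}_P$ all have codimension $4$, so $\dim L_i=\dim L_j$, hence $\dim K_i=\dim K_j=:m\in\{1,\dots,4\}$; moreover $K_i\cap K_j\subseteq(L_i\cap L_j)\cap K_i=L_{ij}\cap L_{ij}^{\bot}=\{0\}$, so the minimal angle $\theta:=\tfrac{\pi}{2}d(K_i,K_j)$ between $K_i$ and $K_j$ is positive and $D_0=m+\tfrac{2}{\pi}\theta>m$. I then fix a unit vector $v\in K_j$ realising $\theta$ and decompose it orthogonally as $v=a+b$ with $a=\pi_{L_i}(v)$, $b=\pi_{L_i^{\bot}}(v)$; a short computation shows $a\in K_i$, that $b\neq 0$ (otherwise $v\in L_{ij}\cap L_{ij}^{\bot}=\{0\}$), and that $\angle(v,L_i)=\angle(v,a)=\theta$.

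The mechanism is then the following. Every $\psi\in P_i$ fixes $L_i$ pointwise and preserves $L_i^{\bot}$, so $\psi v=a+\psi b$ with $a\in L_i$ and $\psi b\in L_i^{\bot}$; hence, if $\angle(b,\psi b)\leq 60^{\circ}$, Lemma \ref{lem:angle_reduction} (applied with $W=L_i$, $w=a$, $u=b$) yields $\angle(v,\psi v)<\angle(v,L_i)=\theta$. I would then check that such a $\psi$ automatically satisfies $D(L_j,\psi L_j)<D_0$: since $\psi$ fixes $L_{ij}$ pointwise, $L_{ij}\subseteq L_j\cap\psi L_j$; if this inclusion is strict the rough part of $D(L_j,\psi L_j)$ is at most $m-1$ and so $D(L_j,\psi L_j)\leq m<D_0$, while if $L_j\cap\psi L_j=L_{ij}$ then the two relevant subspaces are $K_j$ and $\psi K_j$, and $v\in K_j$, $\psi v\in\psi K_j$ give $D(L_j,\psi L_j)=m+d(K_j,\psi K_j)\leq m+\tfrac{2}{\pi}\angle(v,\psi v)<m+\tfrac{2}{\pi}\theta=D_0$. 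Thus everything reduces to finding $\psi\in P_i$ with $\psi L_j\neq L_j$ and $\angle(b,\psi b)\leq 60^{\circ}$.

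This last step is the heart of the argument and is where I expect the main obstacle to lie; it is the point at which the geometry of the $600$-cell enters. Writing $\hat b=b/\|b\|$ and recalling that $P_i$ acts freely on $S(L_i^{\bot})=S^3$ (Lemma \ref{lem:free_minimal}), the points $\psi\hat b$ with $\angle(\hat b,\psi\hat b)\leq 60^{\circ}$ are exactly $\hat b$ together with the orbit points $\bigcup_{\alpha\in(0,60^{\circ}]}X_{\alpha}$ of $X=P_i\hat b$. Suppose, for contradiction, that every $\psi\in P_i$ with $\angle(\hat b,\psi\hat b)\leq 60^{\circ}$ fixed $L_j$. Then $L_j$ would contain all the vectors $v-\psi v=\|b\|(\hat b-\psi\hat b)$, hence $\mathrm{span}\{\hat b-y:y\in\bigcup_{\alpha\in(0,60^{\circ}]}X_{\alpha}\}=\mathrm{span}\{y'-y'':y',y''\in\bigcup_{\alpha\in(0,60^{\circ}]}X_{\alpha}\}$, and this is all of $L_i^{\bot}$ precisely because $\bigcup_{\alpha\in(0,60^{\circ}]}X_{\alpha}$ is not contained in a three-dimensional affine subspace of $L_i^{\bot}$ (section \ref{subsec:the_binary_icosahedral_group}). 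Hence $L_i^{\bot}\subseteq L_j$, i.e.\ $L_i^{\bot}\bot L_j^{\bot}$, i.e.\ $D(L_i,L_j)=5$ by Lemma \ref{lem:characterization_of_D}, contradicting $D_0<5$. Therefore some admissible $\psi$ moves $L_j$; being non-trivial it also moves $\hat b$ by freeness, so $0<\angle(b,\psi b)\leq 60^{\circ}$, and the mechanism above produces the pair $(L_j,\psi L_j)$ contradicting the minimality of $D_0$. Apart from this non-degeneracy input about the $600$-cell orbit, the proof is routine linear algebra together with the spherical-triangle Lemma \ref{lem:angle_reduction}.
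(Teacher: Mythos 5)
Your proposal is correct and follows essentially the same route as the paper: the same descent on the distance $D$ (minimal counterexample versus infinite descent is immaterial), the same application of Lemma \ref{lem:angle_reduction} to the orthogonal decomposition $v=a+b$ of the angle-minimizing vector, and the same appeal to the fact that $\bigcup_{\alpha\in(0,60^{\circ}]}X_{\alpha}$ is not contained in a three-dimensional affine subspace of $L_i^{\bot}$. The only (harmless) local variation is in how you produce a $\psi$ with $\psi L_j\neq L_j$: you argue that otherwise $L_j$ would contain the span of the differences $\hat b-\psi\hat b$ and hence all of $L_i^{\bot}$, contradicting $D_0<5$, whereas the paper shows directly that some $gv_2$ leaves $K_j$ and hence $L_j$ — your version even sidesteps the paper's extra remark about the case $\dim K_i=4$.
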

\begin{proof}
Suppose we have distinct $L_1,L_2 \in \mathfrak{L}_P$ with $D(L_1,L_2)<5$. Since $L_1$ and $L_2$ are distinct, we also have $0<D(L_1,L_2)$ by Lemma \ref{lem:characterization_of_D}, $(iv)$. We are going to find another pair $L_1',L_2' \in \mathfrak{L}_P$ with
\[
		0<D(L_1',L_2')<D(L_1,L_2)<5.
\]
By iteration this yields a contradiction to the finiteness of $\mathfrak{L}_P$ and thereby will prove the lemma.

Since $L_1$ and $L_2$ are distinct, the subspaces $K_{1}= (L_1 \cap L_2)^{\bot_{L_1}}$ and $K_{2}=(L_1 \cap L_2)^{\bot_{L_2}}$ are nontrivial. Therefore, we can choose $v_1 \in K_{1}$ and $v_2 \in K_{2}$ with $\left\|v_1\right\|=\left\|v_2\right\|=1$ such that the angle $\angle(v_1, v_2)$ is minimal and such that $d(K_1,K_2)=\frac{2}{\pi}\angle(v_1, v_2)$ holds. We decompose $v_2$ into orthogonal components with respect to $L_1$ and $L_1^{\bot}$, that is, $v_2=w+u$ where $w \in L_1$ and $u\in L_1^{\bot}$. We claim that $u \neq 0$. Indeed, otherwise we would have $v_2 \in L_1 \cap K_{2} =\{0\}$, a contradiction. 

Since $u$ is nontrivial, the set $S:=\{g\in P_1|0<\angle(gu ,u)\leq \pi/3\} \subset P_1$ is well-defined. By linearity we have
\[
		Sv_2=S(w+u)=w+Su.
\]
By Lemma \ref{lem:not_in_subspace} the smallest affine subspace of $V=\R^n$ that contains $Su$ is four-dimensional and thus given by $L_1^{\bot}$. Therefore, also the smallest affine subspace of $V$ that contains $Sv_2$ is four-dimensional and thus given by  $w+L_1^{\bot}$. Moreover, if $\dim K_{1}=\dim K_{2}=4$, then $w+L_1^{\bot}$ is not a linear subspace, that is $w\neq 0$. Indeed, if $\dim K_{1}=\dim K_{2}=4$, then our assumption $D(L_1,L_2)=\dim K_{1}+d(K_1,K_2)<5$ implies $\frac{2}{\pi}\angle(v_1, v_2)=d(K_1,K_2)<1$ by Lemma \ref{lem:characterization_of_D}. This means that $v_1$ and $v_2$ are not orthogonal and so $w$ is nontrivial in this case.

Since $K_2$ is a linear subspace of $V$ of dimension $\dim K_2 \leq 4$ by Lemma \ref{lem:characterization_of_D}, $(i)$, the preceding paragraph shows that $Sv_2$ is not contained in $K_2$. In particular, there is some $g\in S$ such that $gv_2 \notin K_{2}$. We set $L_1':=gL_2$, $L_2':=L_2$ and claim that $0<D(L'_1,L_2')<D(L_1,L_2)$ holds. This would finish the proof of the lemma as explained above.

Let us first show that $0<D(L_1',L_2')$. By Lemma \ref{lem:characterization_of_D}, $(iv)$, this amounts to showing that $L_2$ and $gL_2$ are distinct. The fact that $g$ fixes $L_1\cap L_2 \subset L_1=\mathrm{Fix}(P_1)$ pointwise and that $v_2\in K_2 \bot (L_1\cap L_2)$ implies $gv_2\bot (L_1 \cap L_2)$. Hence, because of $gv_2 \notin K_{2}$, the vector $gv_2$ is also not contained in $L_2=(L_1 \cap L_2) + K_2$. Now, $v_2\in L_2$ and $gv_2 \notin L_2$ shows that $L_2\neq gL_2$ as desired.

It remains to show that $D(L_2,gL_2)<D(L_1,L_2)$ holds. Set $K_i':=(L_1' \cap L_2')^{\bot_{L_i'}}$, $i=1,2$. Since $g$ fixes $L_1 \cap L_2$ pointwise, we have $L_1 \cap L_2 \subseteq L_2 \cap gL_2$. We distinguish two cases according to whether we have a strict inclusion or not.

Suppose that $L_1 \cap L_2$ is a proper subspace of $L_2 \cap gL_2$. Then we have $\dim K'_1+1 \leq \dim K_1$ and thus
\[
		D(L_2,gL_2)=\dim (K'_1)+d(K'_1,K'_2)\leq D(L_1,L_2)+d(K'_1,K'_2)-d(K_1,K_2)-1.
\]
Our assumption $L_1\neq L_2$ entails $0<d(K_1,K_2)$ by Lemma \ref{lem:characterization_of_D}, $(iv)$. Moreover, also by Lemma \ref{lem:characterization_of_D} we have $d(K'_1,K'_2)\leq 1$. This implies $D(L_2,gL_2)< D(L_1,L_2)$ as desired.

Now suppose that $L_1 \cap L_2 = gL_2 \cap L_2$. In this case we have $K'_{1}=gK_{2}$, $K'_{2}=K_{2}$ and so our claim is equivalent to $d(gK_{2},K_{2})<d(K_{1},K_{2})$. We set $W=L_1$, $U=L_1^{\bot}$, $v=v_2=w+u$ and $\phi=g$. Then all assumptions of Lemma \ref{lem:angle_reduction} are satisfied. We obtain

\[
		d(gK_{2},K_{2}) \leq \frac{2}{\pi} \angle(v_2, gv_2)<\frac{2}{\pi} \angle(v_2, L_1)  = \frac{2}{\pi}\angle(v_2, K_1) = d(K_{1},K_{2})
\]
where we have applied Lemma \ref{lem:angle_reduction} in the strict inequality and the fact that $v_2 \bot (L_1 \cap L_2)$ in the equality $\angle(v_2, L_1)  = \angle(v_2, K_1)$.

Consequently, in any case we have
\[
		0<D(L_1',L_2')<D(L_1,L_2)<5,
\]
and so the lemma follows by contradiction as explained above.
\end{proof}

We have shown that all Poincar\'e groups act in orthogonal spaces. Now we claim that $G_{\rota}$ acts in a space orthogonal to all of them. We can assume that there are maximal subspaces $L_2,L_4 \in \mathfrak{L}_{\maxi}$ with $\codim L_2 = 2$ and $\codim L_4 = 4$. We denote the corresponding minimal subgroups by $C$ and $P$ in accordance with Proposition \ref{prp:characterization_minimal_subgroup_Top}. The claim follows from the subsequent lemma.

\begin{lem}\label{lem:4_and_2}
The subgroups $C$ and $P$ of $G$ act in orthogonal spaces. That is, we have $L_2^{\bot} \bot L_4^{\bot}$.
\end{lem}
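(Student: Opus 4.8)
The plan is to imitate the proof of Lemma~\ref{lem:poincares_are_orthogonal}, after one preliminary reduction that uses Lemma~\ref{lem:poincares_are_orthogonal} itself.

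\emph{Step 1: $C$ leaves $L_4^{\bot}$ invariant.} Suppose not, and pick $c\in C$ with $cL_4\neq L_4$. Then $F(cL_4)=cF(L_4)c^{-1}$ is a conjugate of the Poincar\'e group $P=F(L_4)$, hence again a Poincar\'e minimal subgroup (because $\mathfrak{F}_{\mini}$ is $\Gamma$-invariant), so $cL_4\in\mathfrak{L}_P$ and Lemma~\ref{lem:poincares_are_orthogonal} gives $L_4^{\bot}\bot c(L_4^{\bot})$. Writing $x=x'+x''$ with $x'=P_{L_2}(x)$, $x''=P_{L_2^{\bot}}(x)$ for $x\in L_4^{\bot}$, and using that $c$ fixes $L_2$ pointwise and rotates the $2$-plane $L_2^{\bot}$ by a fixed angle $\theta\neq 0$, the relation $\langle cx,x\rangle=0$ becomes $|x'|^2+\cos\theta\,|x''|^2=0$, whence $|P_{L_2^{\bot}}(x)|^2=\frac{1}{1-\cos\theta}\,|x|^2$ for \emph{all} $x\in L_4^{\bot}$. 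Thus $P_{L_2^{\bot}}$ restricts to an injective linear map $L_4^{\bot}\to L_2^{\bot}$, which is impossible since $\dim L_4^{\bot}=4>2=\dim L_2^{\bot}$. Hence $cL_4=L_4$ for every $c\in C$; in particular $C$ preserves $L_4^{\bot}$ and normalises $P$.

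\emph{Step 2: reduction.} It suffices to prove that $C$ acts trivially on $L_4^{\bot}$, since then $L_4^{\bot}\subseteq\mathrm{Fix}(C)=L_2$, i.e.\ $L_2^{\bot}\bot L_4^{\bot}$. So assume the contrary. The case $L_2^{\bot}\subseteq L_4^{\bot}$ is excluded, as it would give $C=F(L_2)\subseteq F(L_4)=P$, contradicting that the distinct minimal subgroups $C$ and $P$ are incomparable. Looking at how $C$ acts on the $C$-invariant subspaces $L_2^{\bot}\cap L_4$ and $L_2^{\bot}\cap L_4^{\bot}$ of the $2$-plane $L_2^{\bot}$, one sees that $L_2$ and $L_4$ are in ``general position'': $Q:=P_{L_4^{\bot}}(L_2^{\bot})$ is a $2$-dimensional $C$-invariant subspace of $L_4^{\bot}$ on which $C$ acts by a nontrivial rotation, a generator of $C$ acts on $L_4^{\bot}$ as a pseudoreflection with fixed plane $W:=L_4^{\bot}\cap L_2$, and every conjugate $gL_2$ $(g\in P)$ is again in this position relative to $L_4$, with rotation plane $gQ\subseteq L_4^{\bot}$. (The degenerate possibilities occurring when the rotation angle equals $180^{\circ}$, where $L_2^{\bot}$ need not be $C$-irreducible, have to be disposed of by a separate, easier, variant of the argument below.)

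\emph{Step 3: the contradiction.} Now one argues as in the proof of Lemma~\ref{lem:poincares_are_orthogonal}, with the free action $P\curvearrowright L_4^{\bot}$ playing the role that $P_1\curvearrowright L_1^{\bot}$ played there. For a unit vector $u\in Q$ the orbit $Pu$ is a scaled copy of the $600$-cell, so by the property of $\bigcup_{\alpha\in(0,60^{\circ}]}X_{\alpha}$ recorded in Section~\ref{subsec:the_binary_icosahedral_group} it is not contained in any affine plane of $L_4^{\bot}$; hence some $g\in P$ with $0<\angle(gu,u)\le 60^{\circ}$ moves $u$ out of $Q$, so that $gL_2$ is a codim-two maximal subspace distinct from $L_2$. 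Equipping the pairs of maximal subspaces in play with a distance $D$ built exactly as in Lemma~\ref{lem:characterization_of_D} (a ``rough'' part coming from a $\dim K$, a ``fine'' part coming from the normalised minimal angle $d$), one has $D$ maximal precisely when the relevant complements are orthogonal, and Lemma~\ref{lem:angle_reduction}, applied with $W=L_4$, $U=L_4^{\bot}$ and $\phi=g$, shows that replacing $L_2$ by $gL_2$ yields a new pair that is strictly closer to the orthogonal configuration while still distinct from it. Since there are only finitely many maximal subspaces of any given codimension, iterating produces an infinite strictly monotone chain of values of $D$, which is absurd. Therefore $C$ acts trivially on $L_4^{\bot}$, i.e.\ $L_2^{\bot}\bot L_4^{\bot}$.

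I expect Step~3 to be the real work: choosing the distance $D$ so that Lemma~\ref{lem:angle_reduction} gives a genuine strict change, carrying out the book-keeping of the ``rough'' part of $D$ (which does not move under $P$, so that one must set up the descent between the right pair types), and treating the degenerate cases noted in Step~2. Step~1 -- in particular the dimension count -- is routine, as is the reduction in Step~2.
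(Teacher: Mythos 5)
Your Step 1 is correct, and it is a legitimate variant of the paper's own opening move (the paper goes in the opposite direction: assuming $L_2^{\bot}\not\bot L_4^{\bot}$, it shows via a case analysis on $\codim(L_2\cap L_4)\in\{5,6\}$ that a nontrivial $g\in C$ yields a \emph{second} codimension-four maximal subspace $gL_4\neq L_4$ with $gL_4^{\bot}\cap L_4^{\bot}\neq 0$ by a dimension count, which contradicts Lemma \ref{lem:poincares_are_orthogonal} directly; no new descent is run). The gap is in what follows. Once $C$ preserves $L_4^{\bot}$, note that for every $c\in C$ and $x\in L_4^{\bot}$ the vector $cx-x$ lies in $L_2^{\bot}$ (as $c$ is the identity on $L_2$ and preserves $L_2^{\bot}$) and in $L_4^{\bot}$, hence in $L_2^{\bot}\cap L_4^{\bot}$, whose dimension is $0$ or $1$ once $L_2^{\bot}\subseteq L_4^{\bot}$ is excluded. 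If it is $0$, then $C$ already acts trivially on $L_4^{\bot}$ and you are done; if it is $1$, then $\mathrm{rank}(c|_{L_4^{\bot}}-\mathrm{id})\leq 1$, so $c|_{L_4^{\bot}}$ is the identity or a hyperplane reflection, and $C$ preserves a line of $L_2^{\bot}$, forcing $C\cong\Z/2$ with generator acting as $-\mathrm{id}$ on $L_2^{\bot}$. In particular your ``general position'' scenario of Step 2 is impossible: a generator acting on $L_4^{\bot}$ as a pseudoreflection would have its two-dimensional rotation plane contained in $L_2^{\bot}\cap L_4^{\bot}$, hence equal to $L_2^{\bot}$, which is the excluded case. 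So Step 3 addresses a vacuous configuration, while the whole content of the lemma sits in the ``degenerate possibility'' you dismiss parenthetically: $C$ of order two restricting to a genuine reflection of $L_4^{\bot}$. That case is not easier; it needs real input (e.g.\ such a $c$ normalises $P$ while acting on $L_4^{\bot}$ as an orientation-reversing element of $O(4)$, but conjugation by an orientation-reversing element interchanges left- and rightscrews and so cannot normalise the leftscrew group $P=(\mathbf{C}_2/\mathbf{C}_2;\ico/\ico)$ -- or one argues as in the paper).

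Independently, the descent of Step 3 is structurally unsound even where it would apply. Lemma \ref{lem:angle_reduction} with $W=L_4$ and $\phi=g\in P$ controls $\angle(v_2,gv_2)$, i.e.\ the fine distance between $K_2$ and $gK_2$; it says nothing about the distance from $gK_2$ to $K_4$. The step therefore replaces $(L_2,L_4)$ by the pair $(L_2,gL_2)$ of \emph{two codimension-two} maximal subspaces, and for such pairs there is no orthogonality statement to contradict: rotation planes of a pseudoreflection group are in general not pairwise orthogonal (already for the rotation subgroup of a rank-three Coxeter group). The finiteness-plus-monotonicity argument thus never closes up.
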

\begin{proof} For every nontrivial $r \in C$ the intersection $L_4^{\bot} \cap r L_4^{\bot}$ is nontrivial because of $\mathrm{dim}(L_4^{\bot})=4$ and $\mathrm{codim}(\mathrm{Fix}(r))=2$. Hence $L_4^{\bot}$ is not orthogonal to $r L_4^{\bot}$. Therefore, by Lemma \ref{lem:poincares_are_orthogonal} we can assume that $C$ leaves $L_4^{\bot}$ and $L_4$ invariant. Uniqueness of isotypical (canonical) components of the action of $C$ \cite[Thm.~2.6.8]{MR0450380} yields orthogonal decompositions
\[
		  L_4^{\bot}= \left(L_2^{\bot} \cap L_4^{\bot}\right) \oplus \left(L_2 \cap L_4^{\bot}\right) \text{ and } L_4= \left(L_2^{\bot} \cap L_4\right) \oplus \left(L_2 \cap L_4\right).
\]
Therefore we have an orthogonal decomposition
\[
L_2^{\bot}= \left(L_2^{\bot} \cap L_4^{\bot}\right) \oplus\left( L_2^{\bot} \cap L_4\right).
\]
The subspace $L_2^{\bot} \cap L_4$ is nontrivial, because otherwise we would have $L_4\subseteq L_2$ in contradiction to the maximality of $L_4$.

In order to show that $L_2^{\bot} \bot L_4^{\bot}$ it remains to exclude the case that $L_2^{\bot}\cap L_4^{\bot}$ is one-dimensional. In this case $C$ has order $2$ since $C$ acts nontrivially and freely on the unit sphere in $L_2^{\bot}\cap L_4^{\bot}$. More precisely, the only rotation $r\in C$ is a product of a reflection $s_1$ that inverts $L_2^{\bot}\cap L_4^{\bot}$ and fixes $L_4$ pointwise and another reflection $s_2$ within $L_2^{\bot}\cap L_4$ that fixes $L_4^{\bot}$ pointwise. Since the action of $P$ on $L_4^{\bot}$ is irreducible, there exists some $g \in P$ with $gs_1 g^{-1} \neq s_1$. For, otherwise $\mathrm{Fix}(s_1)^{\bot}$ would be a proper $P$-invariant subspace of $L_4^{\bot}$. The transformations $g$ and $s_2$ commute since they act in orthogonal spaces.
Since also $s_1$ and $s_2$ commute by the same reason, the transformation $r':=r g r g^{-1}=s_1s_2 g s_1s_2 g^{-1}=s_1 g s_1 g^{-1}$ is a rotation contained in $G$ with $L_4\subset\mathrm{Fix}(s_1)\cap\mathrm{Fix}(g)\subset\mathrm{Fix}(r')$. This implies $r'\in I(L_4)=P$. However, a Poincar\'e group does not contain rotations since it acts freely on the unit sphere in $L_4^{\bot}$. Hence, we have $L_2^{\bot} \cap L_4^{\bot}=\{0\}$ by contradiction. The orthogonal decomposition above implies that $L_2^{\bot} \bot L_4^{\bot}$ and so the lemma is proven.
\end{proof}
Let us summarize in a proposition what has been shown so far.
\begin{prp} Let $G<\Or_n$ be a nontrivial finite subgroup. Assume that $\R^n/G$ is a homology manifold. Then $G_{\mini}$ is a nontrivial normal subgroup of $G$ and $G_{\mini}$ splits as a product
\begin{eqnarray*}
		G_{\mini} = G_{\rota} \times P_1 \times \cdots \times P_k
\end{eqnarray*}
of a rotation group $G_{\rota}<\SOr(V_{\rota})$ and Poincar\'e groups $P_i<\SOr(V_{i})$, $i=1,\ldots,k$, that act in pairwise orthogonal spaces $V_{\rota}\oplus V_{1} \oplus \cdots \oplus V_{k}=\R^n$.
\label{prp:minsub}
\end{prp}

\subsection{Free actions on homology spheres} \label{sub:B_and_C_man}
We still assume that $G < \Or_n$ is a finite subgroup for which $\R^n/G$ is a homology manifold. The aim of this section is to show that $G_{\mini}$ actually coincides with $G$ and so to deduce the only if directions of Theorem A and Corollary B in the case of manifolds without boundary.

\begin{lem}Let $\Gamma \curvearrowright X$ be a group action and let $N \triangleleft \Gamma$ be a normal subgroup. If $\Gamma_x = N_x$ for all $x\in X$, then the action
\[
		\Gamma/N \curvearrowright X/N
\]
is free.
\label{lem:free-action}
\end{lem}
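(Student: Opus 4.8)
The plan is to verify directly that the natural action is well defined and then unwind the hypothesis on stabilizers. First I would recall that $\Gamma$ acts on $X/\Gamma'$ by $g\cdot(\Gamma' x) = \Gamma'(gx)$; this is well defined precisely because $\Gamma'$ is normal, since if $\Gamma' x = \Gamma' y$, say $y = hx$ with $h\in\Gamma'$, then $gy = (ghg^{-1})gx$ with $ghg^{-1}\in\Gamma'$, so $\Gamma'(gy) = \Gamma'(gx)$. Because $\Gamma'$ acts trivially on $X/\Gamma'$ by construction, this descends to an action $\Gamma/\Gamma' \curvearrowright X/\Gamma'$.

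For freeness, suppose a coset $g\Gamma'$ fixes a point $\Gamma' x \in X/\Gamma'$. Then $\Gamma'(gx) = \Gamma' x$, so there is $h\in\Gamma'$ with $gx = hx$, i.e.\ $h^{-1}g \in \Gamma_x$. By hypothesis $\Gamma_x = \Gamma'_x \subseteq \Gamma'$, hence $h^{-1}g\in\Gamma'$ and therefore $g\in\Gamma'$, so $g\Gamma'$ is the identity element of $\Gamma/\Gamma'$. Thus the only element of $\Gamma/\Gamma'$ with a fixed point is the identity, which is exactly the statement that the action is free. There is no real obstacle here: the content is entirely in observing that the equality $\Gamma_x = \Gamma'_x$ forces every element of $\Gamma$ that stabilizes a $\Gamma'$-orbit to already lie in $\Gamma'$, and the only mild point to be careful about is checking that the induced action is well defined, which is where normality of $\Gamma'$ enters.
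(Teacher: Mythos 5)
Your argument is correct and is essentially identical to the paper's proof: both reduce freeness to the observation that $gx = hx$ with $h \in \Gamma'$ forces $h^{-1}g \in \Gamma_x = \Gamma'_x \subseteq \Gamma'$, hence $g \in \Gamma'$. The additional check that the induced action is well defined is a harmless (and reasonable) extra detail the paper omits.
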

\begin{proof} Assume there are $g \in \Gamma$, $h \in N$ and $x \in X$ such that $g x = h x$. Then $h^{-1}g \in \Gamma_x = N_x \subset N$ and so $g \in N$.
\end{proof}

Now we can complete our proof.

\begin{lem} \label{lem:gamma_gamma_mini}Let $G<\Or_n$ be a finite subgroup such that $\R^n/G$ is a homology manifold. Then we have $G = G_{\mini}$.
\end{lem}
\begin{proof}
The proof is by induction on $n$. By Lemma \ref{lem:min_implies_hom_sphere} we have $G<\SOr_n$. For $n=2$ and $n=3$ all finite subgroups of $\SOr_n$ are rotation groups. In this case the claim follows from Proposition \ref{prp:minsub_generators}. So let $n\geq 4$ be fixed and assume that the claim holds in all dimensions lower than $n$. We are going to show that it also holds in dimension $n$.

Let $x \in S^{n-1}$. Since $\R^n/G$ is a homology manifold, so is $(\R x)^{\bot} / G_x=T_x S^{n-1}/G_x$ as discussed in the proof of Lemma \ref{lem:min_implies_hom_sphere}. Hence, we have $G_x = (G_x)_{\mini}$ by induction. Together with Lemma \ref{lem:stabilizer_minimizer_inclusion} we obtain
\[
		G_x = (G_x)_{\mini} \subseteq (G_{\mini})_x \subseteq G_x,
\]
and thus $G_x= (G_{\mini})_x$ for all $x \in S^{n-1}$. Therefore, the action $G/G_{\mini} \curvearrowright S^{n-1}/G_{\mini}$ is free by Lemma \ref{lem:free-action}. Moreover, $\R^n/G_{\mini}$ is a homology manifold by Proposition \ref{prp:minsub} and the if direction of Theorem A, which has been proven at the beginning of this section. Therefore, by Proposition \ref{prp:char_sl_homology} we either have $G=G_{\mini}$, or $n=4$ and $G/G_{\mini}$ is a perfect group.

We can assume that we are in the second case. Then, by Proposition \ref{prp:minsub}, $G_{\mini}$ is either a Poincar\'e group $P$ or the rotation group $G_{\rota}$. If $G_{\mini}$ is a Poincar\'e group, then the action of $G$ on $S^3\subset \R^4$ must be free. For, otherwise there would be a minimal subgroup $H<G_{\mini}=P$ with $\dim L(H)<4$ contradicting the fact that the action of $P$ on $S^3\subset \R^4$ is free. Therefore, in this case we have $G_{\mini}=P=I(L(P))=I(\{0\})=G$. On the other hand, if $n=4$, $G_{\mini}=G_{\rota}$ and $G/G_{\mini}$ is nontrivial and perfect, then Lemma \ref{lem:normal_subgroup_generated_by_ps_in_dim4} implies that $G=P$ and $G_{\rota}=\{1\}$. This is a contradiction to $G_{\mini}\neq \{1\}$ by Lemma \ref{lem:min_nonempty}. Consequently, in any case we have $G = G_{\mini}$ and so the claim follows by induction.
\end{proof}

\begin{proof}[Proof of Theorem A for manifolds without boundary and of Corollary B] At the beginning of this section we have proven the if directions of Theorem A and Corollary B. Recall that a topological manifold is also a homology manifold. Hence, by Lemma \ref{lem:characterization_of_homology_manifolds}, $(ii)$, Lemma \ref{lem:gamma_gamma_mini} and Proposition \ref{prp:minsub} it remains to prove that the additional condition on $k$ depending on $n$ is necessary. This amounts to noting that neither the quotient of $\R^4$ by a Poinca\'e group $P$, nor the suspension $\Sigma (S^3/P) \cong S^0 * S^3/P$ is a topological manifold. This completes the proof of Corollary B and Theorem A in the case of manifolds without boundary. 
\end{proof}

\subsection{The case of manifolds with boundary} \label{sub:boundary_case}

In this section we complete the proof of Theorem A in the general case. First, we prove some preparatory statements.

\begin{lem}\label{lem:hom_man_reflection}
Let $G<\Or_n$ be a finite subgroup. Suppose that $\R^n/G$ is a homology manifold with boundary. Then the boundary of $\R^n/G$ is nonempty if and only if $G$ contains a reflection.
\end{lem}
\begin{proof} By Lemma \ref{lem:ac_hom_quo} we have $G<\SOr_n$ if the boundary of $\R^n/G$ is empty. In particular, the group $G$ does not contain a reflection in this case.

We prove the converse implication by induction on $n$. For $n=1$ the claim is clear. Assume it holds for some fixed $n\geq1$. Let $G<\Or_{n+1}$ be a finite subgroup for which $\R^{n+1}/G$ is a homology manifold with nonempty boundary. Then $S^n/G$ is also a homology manifold with nonempty boundary by Lemma \ref{lem:characterization_of_homology_manifolds}. Let $x \in S^n$ be a point whose coset $\overline{x}$ lies in the boundary of $S^n/G$. Since a neighborhood of $\overline{x}$ is homeomorphic to $T_x S^n /G_x$ the group $G_x\subset G$ contains a reflection by the induction assumption. This completes the proof by induction.
\end{proof}

\begin{cor}\label{cor:boundary_of_hom_man}
Let $G<\Or_n$ be a finite subgroup. Suppose that $\R^n/G$ is a homology manifold with boundary. Then a point $x \in \R^n/G$ belongs to the boundary of $\R^n/G$ if and only if the local group $G_x$ contains a reflection. In other words, by Lemma \ref{lem:charact_codim_1_stratum} the boundary of $\R^n/G$ as a homology manifold coincides with the closure of the codimension $1$ stratum of $\R^n/G$ as an orbifold.
\end{cor}
\begin{proof} A neighborhood of $x$ in $\R^n/G$ is homeomorphic to $\R^n/G_x$. Since the boundary of $\R^n/G_x$ is nonempty if and only of the coset of the origin belongs to the boundary, the claim follows from Lemma \ref{lem:hom_man_reflection}. 
\end{proof}

\begin{lem}\label{lem:double_of_hom_with_boundary}
Let $G<\Or_n$ be a finite subgroup and let $\Gp=G\cap \SOr_n$ be the orientation preserving subgroup of $G$. Suppose that $\R^n/G$ is a homology manifold with nonempty boundary. Then $\R^n/\Gp$ is equivariantly isometric to $2 (\R^n/G)$. In particular, $\R^n/\Gp$ is a homology manifold.
\end{lem}
\begin{proof} By Corollary \ref{cor:boundary_of_hom_man} and Proposition \ref{prp:orbifold_double} the topological double $2(\R^n/G)$ of $\R^n/G$ along its boundary as a homology manifold is homeomorphic to the double of $\R^n/G$ as an orbifold. As mentioned in Section \ref{sub:homology_manifold} the double $2(\R^n/G)$ is a homology manifold \cite[Ch.~5]{MR1402473}. Moreover, the natural projection $2(\R^n/G) \To \R^n/G$ is a covering of Riemannian orbifolds (see Proposition \ref{prp:orbifold_double}). Therefore, by the covering space theory of (Riemannian) orbifolds there exists an index $2$ subgroup $\tilde{G}$ of $G$ for which the identity map of $\R^n/G$ lifts to an isometry from $2(\R^n/G)$ to $\R^n/\tilde{G}$ (see e.g. \cite{Lange2}). Since the acting group is of order two, this isometry must be equivariant. Moreover, since $\R^n/\tilde{G}\cong 2(\R^n/G)$ is a homology manifold, we have $\tilde{G}<\SOr_n$ by Lemma \ref{lem:ac_hom_quo}.  This implies $\tilde{G}\subseteq \Gp$ and thus $\tilde{G}= \Gp$ as both $\tilde{G}$ and $\Gp$ are index $2$ subgroups of $G$. This completes the proof of the lemma.
\end{proof}

Now we can complete the proof of Theorem A. 

\begin{proof}[Proof of Theorem A] It remains to prove the only if direction of Theorem A for manifolds with nonempty boundary. Suppose that $G<\Or_n$ is a finite subgroup for which $\R^n/G$ is a topological-, and hence a homology manifold with nonempty boundary. Then $G$ contains a reflection $s$ by Lemma \ref{lem:hom_man_reflection}. The group $G$ is generated by $s$ and by its orientation preserving subgroup $\Gp$. By Lemma \ref{lem:double_of_hom_with_boundary} the quotient $\R^n/\Gp$ is a homology manifold. In particular, $\Gp$ is a product of a rotation group and a certain number of Poincar\'e groups that act in orthogonal subspaces by the manifold version of Theorem A. Since $s$ normalizes the group $\Gp$ and has a codimension $1$ fixed-point subspace, it can only act in one of these orthogonal subspaces defined by $\Gp$. We have to show that it can only act in the subspace corresponding to the rotation group. Suppose it acts in a four-dimensional subspace defined by a Poincar\'e group. We are going to obtain a contradiction by showing that $\R^n/G$ is not a homology manifold with boundary in this case. By Lemma \ref{lem:characterization_of_homology_manifolds} we can assume that $n=4$, that $\Gp=P$, a Poincar\'e group, and that $G=\left\langle P,s \right\rangle$. Note that the normalizer of $P$ in $\Or_4$ indeed contains reflections.

In this case the coset of $s$ in $G/P$ acts as an orientation reversing isometry on $S^3/P$. Hence, its fixed-point subspace is a disjoint union of isolated points and embedded surfaces. By Corollary \ref{cor:boundary_of_hom_man} the cosets of the isolated fixed points cannot lie in the boundary of $S^3/G$. Moreover, by Lemma \ref{lem:ac_hom_quo} all local groups of $S^3/G$ as an orbifold corresponding to points in the interior of $S^3/G$ as a homology manifold preserve the orientation. Therefore the action of the involution $s$ on $G/P$ cannot have isolated fixed points at all. The remaining fixed embedded surfaces project onto the boundary of $S^3/G$ by Lemma \ref{cor:boundary_of_hom_man}. Now Lemma \ref{lem:characterization_of_homology_manifolds}, $(iv)$, shows that only a single embedded sphere $S^2$ can occur as fixed point set. Therefore, by Lemma \ref{lem:double_of_hom_with_boundary} we can write $S^3/P$ as a double $2_{S^2} (S^3/G)$. The theorem of Seifert and van Kampen on fundamental groups \cite[Thm.~1.20]{MR1867354} implies that $\pi_1(S^3/P)\cong P$ is a free product of isomorphic groups. Such a free product is either trivial or has infinite order. This gives the desired contradiction. 

It remains to show that the additional condition on $k$ depending on $n$ is satisfied. More precisely, we have to show that, for a Poincar\'e group $P<\SOr_4$, the quotient $\R_{\geq0} \times \R^4/P$ is not a topological manifold with boundary. To this end we observe that the boundary of $\R_{\geq 0} \times \R^4/P$ as a homology manifold is $\{0\} \times \R^4/P$, which is not a topological manifold. Therefore also $\R_{\geq 0} \times \R^4/P$ cannot be a topological manifold. Hence, in the case that $n\leq 5$ and that $G$ contains a reflection we must have $k=0$ as claimed. This completes the proof of Theorem A.
\end{proof}

\section{When is a Riemannian orbifold a Lipschitz manifold?} \label{sec:Lip_sec}

We say that a metric space $X$ is a \emph{Lipschitz $n$-manifold with boundary} if each point has a neighborhood that is bi-Lipschitz homeomorphic to some open set in the half-space $\R_{\geq 0}\times \R^{n-1}$. The \emph{boundary} of $X$ is defined as in the case of a topological manifold. By Lemma \ref{lem:lipschitz_orbifold} the question of when a Riemannian orbifold is a Lipschitz manifold with boundary amounts to proving Corollary C, which we will do in the present section. To this end we first recall the concept of a polyhedral metric.

Let $K$ be a locally finite simplicial complex. A compatible choice of flat metrics on the simplices of $K$ induces a so-called \emph{polyhedral metric} on $K$ as the unique maximal metric on $K$ that is majorized by the metrics $d_{\Delta}$ on each simplex $\Delta$ of $K$ \cite[Def.~3.2.4]{MR1835418}. With respect to this metric $K$ is a length space \cite[3.1.24]{MR1835418}. Equivalently, this metric can be described as a \emph{gluing metric} where the distance between two points $x,y \in K$ is defined as the infimum of $	\sum_{i=0}^{k} d_{\Delta_i}(x_i,y_i)$ over all sequences $x_i,y_i$, $i=0,\ldots,k$ with $x=x_0$, $y=y_k$, $x_i,y_i \in \Delta_i$, $\Delta_i$ a simplex of $K$, and $x_i=y_{i+1}$, $i=0,\ldots,k-1$ \cite[3.1.12, 3.1.27]{MR1835418}.

The following lemma shows that the question if the resulting metric space is a Lipschitz manifold does not depend on the specific choice of the polyhedral metric.

\begin{lem}\label{lem:poly_metric_Lip_equiv}
Let $d_0$ and $d_1$ be polyhedral metrics on a locally finite simplicial complex $K$. Then the identity map between $(K,d_0)$ and $(K,d_1)$ is locally bi-Lipschitz. If $K$ is a finite complex, then it is (globally) bi-Lipschitz.
\end{lem}
\begin{proof} The claim follows easily from the observation that for each $\Delta \in K$ there exists some $c>0$ for which the identity map $(\Delta,d_{\Delta,i}) \To (\Delta,d_{\Delta,i})$ is $c$-bi-Lipschitz.
\end{proof}

Given the correctness of the 3-dimensional Poincar\'e conjecture \cite{Per02,Per03a,Per03b}, Siebenmann and Sullivan established the following necessary and sufficient condition for a simplicial complex to be a Lipschitz manifold \cite{MR537747} (cf. \cite[Remark~$(i)$, p.~507]{MR537747}). 
\begin{thm}[Siebenmann-Sullivan] \label{thm:characterization_lip_manifolds}
Let $K$ be a locally finite simplicial complex with some polyhedral metric. Then $K$ is a Lipschitz $n$-manifold, if and only if the link of every vertex of $K$ with some polyhedral metric is in turn a Lipschitz $(n-1)$-manifold and has the homotopy type of an $(n-1)$-sphere.
\end{thm}

In order to apply this result we would like to relate the quotient metric on $\R^n/G$, for a finite subgroup $G < \Or_n$, to a certain polyhedral metric. With respect to the quotient metric $d_q$ on $\R^n/G$ the projection from $\R^n$ to $\R^n/G$ is $1$-Lipschitz. We choose an admissible triangulation $K$ of $\R^n$ for the action of $G$ as discussed in Section \ref{sub:ad_triangulations}. Let $L$ be the link of the origin in $K$ so that $L/G$ is the link of the coset of the origin in $K/G$, cf. Section \ref{sub:ad_triangulations}. Since the action of $G$ on $\R^n$ commutes with multiplication by real numbers, the quotient $\R^n/G$ inherits a cone structure from $\R^n$. For a simplex $\Delta \in L/G$ and a preimage $\hat{\Delta} \in L$ of $\Delta$ we denote the flat metric on $C \Delta$ inherited from $C \hat {\Delta} \subset \R^n$  by $d_{\Delta}$. The metrics $d_{\Delta}$ on the cones $C \Delta$, $\Delta \in L/G$, induce a ``polyhedral'' metric $d_p$ on $\R^n/G$ in the same way as above (we may also assume that the cones $C \Delta$ are subcomplexes of $K/G$ and omit the quotation marks). We work with the cones $C \hat {\Delta}$, $\Delta \in L/G$, instead of the simplices of $K/G$ in order to obtain a \emph{global} Lipschitz homeomorphism in the statement of Corollary C.

\begin{lem}\label{lem:quotient_poly_metric}
The quotient metric $d_q$ and the polyhedral metric $d_p$ on $\R^n/G$ coincide. 
\end{lem}
\begin{proof} Since the projection map $\R^n \To (\R^n/G,d_q)$ is $1$-Lipschitz, on each cone $C\Delta$, $\Delta \in L/G$, the quotient metric $d_q$ is majorized by the metric $d_{\Delta}$ on $C\Delta$ inherited from $\R^n$. By maximality of $d_p$ we thus have $d_q \leq d_p$.

Let $x,y \in \R^n/G$ be two points. The distance $d_q(x,y)$ between the respective orbits in $\R^n$ is realized by a straight line $\gamma:[0,L]\To \R^n$ between representatives of these orbits. Since $\R^n \To (\R^n/G,d_q)$ is $1$-Lipschitz, $\gamma$ projects to a length minimizing path $\overline{\gamma}$ between $x$ and $y$. 
There exists a finite subdivision $0=t_0<t_1<\ldots <t_{k-1}<t_k=L$ such that each restriction $\gamma_{|[t_i,t_{i+1}]}$, $i=0,\ldots,k-1$, is contained in a cone $C \Delta$ of a simplex $\Delta \in L$. Hence, also each restriction $\overline{\gamma}_{|[t_i,t_{i+1}]}$, $i=0,\ldots,k-1$, is contained in a cone $C \Delta_i$, $\Delta_i \in L/G$. It follows that 
\[
			d_p(x,y) \leq\sum_{i=0}^{k-1} d_{\Delta_i}(\overline{\gamma}(t_i),\overline{\gamma}(t_{i+1})))=L(\gamma)=d_q(x,y)
\]
by the characterization of $d_p$ as a gluing metric and hence $d_q=d_p$ as claimed. \end{proof}

In order to treat the case of Lipschitz manifolds with boundary we need one more lemma.

\begin{lem}\label{lem:double_is_Lipschitz} Let $(X,d)$ be a metric space that is a Lipschitz manifold with boundary. Then its metric double $(2X,d_2)$ of $X$ along its boundary (cf. Section \ref{sec:Riemannian_orbifolds}) is a Lipschitz manifold with empty boundary.
\end{lem}
\begin{proof} By passing to the induced length metric (see \cite[2.2.3]{MR1835418}) we can assume that $(X,d)$ is a length space. We leave out the details since we only apply the lemma to a length space anyway (see Section \ref{sec:if_cor_c}). Let $x_0 \in \partial X$ and let $\varphi_0 : \R^{n-1} \times \R_{\geq 0} \To X$ be a locally bi-Lipschitz chart with $x_0 = \varphi(0)$. There exists a unique extension $\varphi : \R^{n} \To 2X$ of $\varphi_0$ that is equivariant with respect to the reflections of $\R^{n}$ at $\R^{n-1} \times \{0\}$ and of $2X$ at $\partial X$. The two copies of $X$ are isometrically embedded in $2X$. Moreover, for some $R>0$ the distance between two points $x,y \in B_{R/8}(x_0)$ that lie in different halfspaces of $2X$ satisfies
\begin{equation}\label{eqn:infinum_rel}
		d_2(x,y)= \inf_{z\in B_R(x_0)\cap \partial X} (d(x,z)+d(z,y)).
\end{equation} Now standard arguments involving the triangle inequality show that $\varphi$ is bi-Lipschitz in a neighborhood of $0$ (cf. older arXiv version of this paper for more details).
\end{proof}

\subsection{Proof of Corollary C} \label{sec:if_cor_c} Now we can prove Corollary C. We begin with the if direction.

\begin{proof}[Proof of the if direction of Corollary C] Let $G<\Or_n$ be a rotation group. By \cite{Lange1} (cf. Theorem \ref{thm:theorem_Lange}) the quotient space $\R^n/G$ is piecewise linear homeomorphic to $\R^n$. As explained in Section \ref{sub:ad_triangulations}, this means that there exists some admissible triangulation $K/G$ of $\R^n/G$ that is simplicially isomorphic to a piecewise linear triangulation $K'$ of $\R^n$. We can assume that the coset $\overline{0}$ of the origin in $\R^n/G$ is identified with the origin in $\R^n$. Then the simplicial isomorphism between $K/G$ and $K'$ restricts to a simplicial isomorphism $\phi$ between the links $L:=\mathrm{link}_{K/G}(\overline{0})$ and $L':=\mathrm{link}_{K'}(0)$. Since the complexes $L$ and $L'$ are finite, it follows (similarly as in the proof of Lemma \ref{lem:poly_metric_Lip_equiv}) that there exists some $c>0$ such that the linear extension $\phi: \R^n/G=C(L)\To \R^n=C(L')$ is $c$-bi-Lipschitz with respect to the polyhedral metrics induced by the flat cones $C\Delta$ and $C\Delta'$, $\Delta \in L$, $\Delta' \in L'$. Since the quotient metric and the polyhedral metric on $\R^n/G$ coincide by Lemma \ref{lem:quotient_poly_metric}, the claim follows in this case.
%
%

If $G<\Or_n$ is a reflection-rotation group that contains a reflection, then the quotient space $\R^n/G$ is piecewise linear homeomorphic to $\R_{\geq 0} \times \R^{n-1}$ by \cite{Lange1} (cf. Theorem \ref{thm:theorem_Lange}). In this case it follows in the same way as above that the quotient space $\R^n/G$ is bi-Lipschitz homeomorphic to $\R_{\geq 0} \times \R^{n-1}$.
\end{proof}

\begin{proof}[Proof of the only if direction of Corollary C] Let us first prove the statement for manifolds without boundary. In this case a proof can be given in the same spirit as in the piecewise-linear category that works by induction on $n$, cf. \cite[Sect.~4]{Lange1}. For $n\leq 2$ the claim is trivially true. Assume it holds for some $n\geq 2$ and let $G<\Or_{n+1}$ be a finite subgroup for which $\R^{n+1}/G$ is a Lipschitz manifold. By Section \ref{sub:ad_triangulations} we can choose a triangulation $K$ of $\R^{n+1}$ that is admissible for the action of $G$. By Theorem \ref{thm:characterization_lip_manifolds} the link $L/G=\mathrm{link}_{K/G}(\overline{0})$, $L=\mathrm{link}_K (0)$, of the image $\overline{0}$ of $0\in\R^n$ in $K/G$ is a Lipschitz manifold with respect to its induced length metric. Let $v$ be a vertex of $\mathrm{link}_K (0)$ and let $\overline{v}$ be its equivalence class in $L/G$. Radial projection induces a bi-Lipschitz homeomorphism between a neighborhood of $\overline{v}$ in $L/G$ and a neighborhood of the coset of the origin in $T_v S^n /G_v$. In particular, for any vertex $v$ of $\mathrm{link}_K( 0)$ the isotropy group $G_v$ is a rotation group by our induction assumption.

The group $G_{\mathrm{iso}}$ generated by all such isotropy groups is normal in $G$ since $G$ permutes the vertices of $\mathrm{link}_K (0)$. Moreover, $G_{\mathrm{iso}}$ is a rotation group by construction. We claim that $G_{\mathrm{iso}}$ contains every isotropy group $G_x$ for $x\in \mathrm{link}_K (0)$. Indeed, if $g\in G$ fixes a point $x\in \mathrm{link}_K (0)$, then by regularity of $\mathrm{link}_K (0)$ (see Section \ref{sub:ad_triangulations}), it fixes the smallest dimensional simplex in $\mathrm{link}_K (0)$ that contains $x$ pointwise. In particular, $g$ fixes a vertex of this simplex and is thus contained in $G_{\mathrm{iso}}$. Therefore, for every $x\neq 0$, we have 
\[
	G_x \subseteq G_x \cap G_{\mathrm{iso}}=(G_{\mathrm{iso}})_x \subseteq G_x
\]
and so $(G_{\mathrm{iso}})_x=G_x$. This means that the action of $G/G_{\mathrm{iso}}$ on $S^n/G_{\mathrm{iso}} \cong(\mathrm{link}_K( 0))/G_{\mathrm{iso}}$ is free by Lemma \ref{lem:free-action}. Because of $n\geq 2$, the quotient space $(S^n/G_{\mathrm{iso}})/(G_{\mathrm{iso}}/G)\cong S^n/G \cong (\mathrm{link}_K (0))/G\cong \mathrm{link}_{K/G} (\overline{0}) $ is simply connected by Theorem \ref{thm:characterization_lip_manifolds} and our assumption that $\R^{n+1}/G$ is a Lipschitz manifold. It follows that $G=G_{\mathrm{iso}}$. In particular, $G$ is a rotation group and the claim follows by induction.

Now suppose that $\R^n/G$  is a Lipschitz manifold with non-empty boundary for some finite subgroup $G<\Or_n$. In this case it follows as in the proof of Theorem A at the end of Section \ref{sec:if_dir}, by an application of Lemma \ref{lem:double_is_Lipschitz}, that $G$ contains a reflection and that its orientation-preserving subgroup $\Gp$ is a rotation group. This shows that $G$ is a reflection-rotation group that contains a reflection and thus completes the proof of Corollary C.
\end{proof}

\end{document}